\newtheorem{theorem}{Theorem}[section]
\newtheorem{corollary}{Corollary}
\newtheorem{lemma}[theorem]{Lemma}
\newtheorem{proposition}{Proposition}
\theoremstyle{definition}
\newtheorem{definition}[theorem]{Definition}
\newtheorem{remark}{Remark}
\numberwithin{equation}{section}
\begin{document}

\title[partially hyperbolic sets and $ACIP$]{partially hyperbolic sets with positive
measure and $ACIP$ for partially hyperbolic systems}

\author[Pengfei Zhang]{Pengfei Zhang}

\address{Department of Mathematics, University of Science and Technology of China,
 Hefei, Anhui 230026, P. R. China\\
\newline and
CEMA, Central University of Finance and Economics, Beijing 100081,
China }

\subjclass{Primary 37D30, 37D10; Secondary 37C40, 37D20.}
\keywords{partially hyperbolic, positive volume, dynamical density
basis, $acip$ measure, weak ergodicity, transitive, accessible,
saturated.}

\email{pfzh311@gmail.com}

\begin{abstract}
In \cite{X} Xia introduced a simple dynamical density basis for
partially hyperbolic sets of volume preserving diffeomorphisms. We
apply the density basis to the study of the topological structure of
partially hyperbolic sets. We show that if $\Lambda$ is a strongly
partially hyperbolic set with positive volume, then $\Lambda$
contains the global stable manifolds over ${\alpha}(\Lambda^d)$ and
the global unstable manifolds over ${\omega}(\Lambda^d)$.

We give several applications of the dynamical density to partially
hyperbolic maps that preserve some $acip$. We show that if $f$ is
essentially accessible and $\mu$ is an $acip$ of $f$, then
$\text{supp}(\mu)=M$, the map $f$ is transitive, and $\mu$-a.e.
$x\in M$ has a dense orbit in $M$. Moreover if $f$ is accessible and
center bunched, then either $f$ preserves a smooth measure or there
is no $acip$ of $f$.
\end{abstract}

\maketitle

\section{Introduction}

Let $M$ be a $n$-dimensional connected, closed manifold, $r>1$ and
$f\in\mathrm{Diff}^r(M)$ be a $C^r$ diffeomorphism on $M$. A compact
$f$-invariant subset $\Lambda\subset M$ is said to be {\it partially
hyperbolic} if there are a nontrivial $Tf$-invariant splitting of
$T_xM = E^s_x\oplus E^{c}_x\oplus E^u_x$ for every $x\in\Lambda$, a
smooth Riemannian metric $g$ on $M$ for which we can choose
continuous positive functions $\nu,\tilde{\nu},\gamma$ and
$\tilde{\gamma}$ on $\Lambda$ with $\nu,\tilde{\nu}<1$ and
$\nu<\gamma\le\tilde{\gamma}^{-1}<\tilde{\nu}^{-1}$ such that, for
all $x\in\Lambda$ and for all unit vectors $v\in E^s_x$, $w\in
E^c_x$ and $v'\in E^u$,
\begin{equation}\label{partial}
\|Tf(v)\|\le\nu(x)<\gamma(x)\le\|Tf(w)\|\le\tilde{\gamma}(x)^{-1}
<\tilde{\nu}^{-1}(x)\le\|Tf(v')\|.
\end{equation}
The notation here is taken from \cite{BW}. Such a metric is called
{\it adapted} (see \cite{G}). If both $E^s$ and $E^u$ are
nontrivial, then we say $\Lambda$ is strongly partially hyperbolic.
In particular the map $f$ is called a (strongly) partially
hyperbolic diffeomorphism if $M$ itself is a (strongly) partially
hyperbolic set. It is well known that $E^s$ and $E^u$ are uniquely
integrable and tangent to the stable lamination $\mathcal{W}^s$ and
the unstable lamination $\mathcal{W}^u$ respectively.

In \cite{X} Xia introduced a simple dynamical density basis for
general partially hyperbolic sets. Namely let $\delta>0$ small,
$W^s(x,\delta)$ be the local stable manifold over $x\in\Lambda$. Let
$B^s_n(p)=f^nW^s(f^{-n}p,\delta)$ for each $p\in\Lambda$ and
$n\ge0$. The collection of sets
$\mathcal{S}=\{B^s_n(p):n\ge0,p\in\Lambda\}$ is called the {\it
stable basis} (see \cite{X}). Let $A\subset\Lambda$ be a measurable
subset. A point $p\in A$ is said to be a $\mathcal{S}$-density point
of $A$ if
\begin{equation*}
\lim_{n\to\infty}\frac{m_{W^s(p)}(B^s_n(p)\cap
A)}{m_{W^s(p)}(B^s_n(p))}=1.
\end{equation*}
where $m_{W^s(p)}$ is the leaf-volume induced by the Riemannian
metric. Let $A^d$ be the set of $\mathcal{S}$-density points of $A$.
Following \cite{X} we have:

\vskip0.3cm

\noindent\textbf{Proposition.} {\it Let $r>1$,
$f\in\mathrm{Diff}^r(M)$ and $\Lambda$ be a partially hyperbolic set
with positive volume. For each measurable subset $A\subset\Lambda$,
$m$-a.e. point in $A$ is $\mathcal{S}$-density point of $A$, that
is, $m(A\backslash A^d)=0$. In words, $\mathcal{S}$ forms a density
basis.}

\vskip0.3cm

This simply defined density basis turns out to be useful in the
study of the topological structure of (partially) hyperbolic sets.
There is an extensive literature discussing the topology of
(partially) hyperbolic sets. We just name a few that are close
related to the results here. Bowen showed in \cite{B}, there exists
$C^1$ horseshoe of positive volume (It is also showed in \cite{B1}
that this {\it fat} horseshoe can not exist among the $C^2$
diffeomorphisms). In \cite{AP} Alves and Pinheiro showed that for a
diffeomorphism $f\in\mathrm{Diff}^r(M)$, if $\Lambda$ is a partially
hyperbolic set that attracts a positive volume set, then $\Lambda$
contains some local unstable disk (hence $\Lambda$ cannot be a
horseshoe-like set). Under a much stronger setting, we can get a
useful characterization that serves well for later applications.
More precisely let $\alpha(x)$ be the set of accumulating points of
$\{f^nx:n\le0\}$ of $x\in M$. For $E\subset M$, let ${\alpha}(E)$ be
the closure of $\bigcup_{x\in E}\alpha(x)$. Similarly we can define
${\omega}(x)$ and ${\omega}(E)$. Then we have

\vskip0.3cm

\noindent\textbf{Theorem A.} {\it Let $f\in\mathrm{Diff}^{r}(M)$ for
some $r>1$ and $\Lambda$ be a partially hyperbolic set with positive
volume. Then $\Lambda$ contains the global stable manifolds over
${\alpha}(\Lambda^d)$, that is, $W^s(x)\subset\Lambda$ for each
$x\in {\alpha}(\Lambda^d)$.

In particular if $\Lambda$ is a strongly partially hyperbolic set
with positive volume, then $\Lambda$ contains the global stable
manifolds over ${\alpha}(\Lambda^d)$ and the global unstable
manifolds over ${\omega}(\Lambda^d)$. }

\vskip0.3cm

The argument here relies on the bounded distortion estimates and the
absolute continuity of stable and unstable laminations, which fail
for $C^1$ maps. See \cite{B,RY}. Although ${\alpha}(\Lambda^d)$ is
nonempty, the volume of ${\alpha}(\Lambda^d)$ could be zero (even in
the hyperbolic case). In fact Fisher \cite{F} constructed several
hyperbolic sets $\Lambda$ with nonempty interior such that
$\alpha(\Lambda^d)$ are repellers and $\omega(\Lambda^d)$ are
attractors (hence their volume must be zero).

A point $x$ is said to be backward recurrent if $x\in\alpha(x)$, and
to be recurrent if $x\in\alpha(x)\cap\omega(x)$. An interesting case
is when most points are recurrent. This will hold in particular if
$\mu(\Lambda)>0$ for some {\it absolutely continuous invariant
probability} measure ({\it acip} for short) $\mu\ll m$. For
simplicity we assume that $\Lambda=\text{supp}\mu$.

\vskip0.3cm

\noindent\textbf{Corollary B.} {\it Let $f\in\mathrm{Diff}^{r}(M)$
for some $r>1$ and $\Lambda$ a strongly partially hyperbolic set
supporting some $acip$ $\mu$. Then $\Lambda$ is bi-saturated, that
is, for each point $p\in\Lambda$, the global stable manifolds and
the global unstable manifolds over $p$ lie in $\Lambda$. }

\vskip0.3cm

In particular we give a dichotomy for maps
$f\in\mathrm{Diff}^{r}(M)$: either $f$ is a transitive Anosov
diffeomorphism, or each $f$-invariant hyperbolic set $\Lambda$ is
$acip$-null, that is, $\mu(\Lambda)=0$ for every $acip$ $\mu$.

\vskip0.3cm

\noindent\textbf{Theorem C.} {\it Let $f\in\mathrm{Diff}^{r}(M)$ for
some $r>1$, $\mu$ be an $acip$ and $\Lambda$ be a hyperbolic set
with positive $\mu$-measure. Then $\Lambda=M$ and $f$ is a
transitive Anosov diffeomorphism on $M$. }

\vskip0.3cm

The similar result has been proved if the $acip$ $\mu$ assumed to be
equivalent to $m$ (see \cite{BocV,X}). Moreover it is proved in
\cite{B1} that for $C^r$ transitive Anosov diffeomorphism, the
$acip$ must have H\"older continuous density with respect to the
volume and be an ergodic (indeed $Bernoulli$) measure, see Remark
\ref{bernoulli}. Also note that the condition that $\Lambda$ has
positive $\mu$-measure for some $acip$ is nontrivial and see
\cite{F} for counter-examples.

Theorem C motivates the analogous generalizations from hyperbolic
systems to accessible partially hyperbolic systems. Recall that an
$f$-invariant measure $\mu$ is said to be {\it weakly ergodic} if
for $\mu$-a.e. $x$, $\mathcal{O}(x)$ is dense in $\text{supp}(\mu)$.
Following generalizes the well known result of Brin \cite{Br} to the
presence of $acip$.

\vskip0.3cm

\noindent\textbf{Theorem D.} {\it Let $f\in\mathrm{SPH}^r(M)$ for
some $r>1$ be essentially accessible. If there exists some $acip$
$\mu$ of $f$, then $\mathrm{supp}(\mu)=M$, the map $f$ is
transitive, and the $acip$ $\mu$ is weakly ergodic. In particular
$\mathcal{O}(x)$ is dense in $M$ for $\mu$-a.e. $x\in M$. }

\vskip0.3cm

In the following we assume $r=2$ for simplicity. Burns and Wilkinson
proved in \cite{BW} that if a map $f\in\mathrm{SPH}^2(M)$ is center
bunched, then every measurable bi-essential saturated set is
essential bi-saturated. Applying to $acip$ we have

\vskip0.3cm

\noindent\textbf{Proposition.} {\it Let $f\in \mathrm{SPH}^2(M)$ be
essentially accessible and center bunched. If there exists some
$acip$ $\mu$, then $\mu$ must be equivalent to the volume. In
particular $\mu$ is ergodic. }

\vskip0.3cm

Note that the arguments in \cite{BW} still work if $f\in
\mathrm{SPH}^{r}(M)$ for $r>1$, as long as we assume {\it strong
center bunching} (see \cite[Theorem 0.3]{BW}). So our results also
extend to this setting. Then applying the {\it cohomologous theory}
developed in \cite{W}, we show that the $acip$ is a {\it smooth
measure}, that is, the density $\frac{d\mu}{dm}$ of $\mu$ with
respect to $m$ is H\"older continuous on $M$, bounded and bounded
away from zero.

\vskip0.3cm

\noindent\textbf{Theorem E.} {\it Let $f\in \mathrm{SPH}^2(M)$ be
accessible and center bunched. If there exists some $acip$, then the
$acip$ must have H\"older continuous density with respect to the
volume of $M$. In words, either $f$ preserves some smooth measure or
there is no $acip$ for $f$. }

\vskip0.3cm

Combining the results in \cite{DW} we have the following direct
corollary:

\vskip0.3cm

\noindent\textbf{Corollary F.} {\it The set of maps that admit no
$acip$ contains a $C^1$ open and dense subset of $C^2$ strongly
partially hyperbolic and center bunched diffeomorphisms. In
particular the set of maps that admit no $acip$ contains a $C^1$
open and dense subset of $C^2$ strongly partially hyperbolic
diffeomorphisms with $\dim(E^c)=1$. }

\vskip0.3cm

Finally we remark that although the volume measure need not be
$f$-invariant, there always exists some $f$-invariant measures. The
density argument combines the dynamics of $acip$ and the dynamics of
volume on $M$. This is why most results of volume-preserving
partially hyperbolic systems have parallel generalizations to the
systems with $acip$.

\section{Dynamical density basis for partially hyperbolic set}

In this section we will consider a $C^r$ diffeomorphism for some
$r>1$ and a partially hyperbolic invariant set with positive volume.
More precisely let $M$ be a closed and connected smooth manifold.
Each Riemannian metric $g$ on $M$ induces a (geodesic) distance $d$
on $M$ and a normalized volume measure $m$ on $M$. Let $\mathcal{B}$
be the Borel $\sigma$-algebra of $M$. A Borel probability measure
$\mu$ on $M$ is said to be {\it absolutely continuous} with respect
to $m$, denoting $\mu\ll m$, if $\mu(A)=0$ for each set $A\in
\mathcal{B}$ with $m(A)=0$, and to be {\it equivalent to} $m$ if
$\mu\ll m$ and $m\ll \mu$. It is evident for any other Riemannian
metric $g'$ compatible with $g$, the induced volume of $g'$ is
equivalent to $m$.

Let $f\in\mathrm{Diff}^r(M)$ for $r>1$ and $\Lambda$ be a compact
partially hyperbolic invariant set with positive volume. In the
following we always assume that the stable subbundle $E^s$ is
nontrivial on $\Lambda$ and $m$ is the normalized volume measure on
$M$ induced by some Riemannian metric adapted to the invariant
splitting (see \cite{G}).

Since $r>1$, it is well known that the stable bundle $E^s$ is
H\"older continuous over $\Lambda$ (the H\"older exponent may be
much smaller than $r-1$, see \cite{BS}) and is tangent to the stable
lamination $\mathcal{W}^s$ over $\Lambda$. (A lamination over
$\Lambda$ is a partial foliation which may not foliate an open
neighborhood of $\Lambda$, see \cite{HPS}. In case that $\Lambda=M$,
$\mathcal{W}^s$ turns out to be a foliation.) For $\delta>0$ small
we use $W^s(x,\delta)$ to denote the local stable manifold over
$x\in\Lambda$. Note that $W^s(x,\delta)$ varies uniformly
$\alpha$-H\"older continuously with respect to the base point
$x\in\Lambda$. For simplicity we also write $E_y^s=T_yW^s(x)$ for
all $y\in W^s(x,\delta)$ and $x\in\Lambda$. The invariance of
$\mathcal{W}^s$ implies that the extended distribution $E^s$ is also
invariant. The H\"older continuity of $E^s$ ensures that the family
$\mathcal{W}^s$ is absolutely continuous. By slightly increasing
$\nu$ and decreasing $\delta$ if necessary, we can assume that for
each $x\in\Lambda$ the following holds:
\begin{equation}\label{stable}
\text{if }p,p'\in W^s(x,\delta),\text{ then
}d(fp,fp')\le\nu(p)d(p,p').
\end{equation}
In particular we have $fW^s(x,\delta)\subseteq
W^s(fx,\delta\!\cdot\!\nu(x))$ for all $x\in \Lambda$.

Before moving on, let's fix some notations as in \cite{BW}. Let
$S\subset M$ be a submanifold of $M$, $m_S$ be the volume measure on
$S$ induced by the restricted Riemannian metric $g|_{S}$ on $S$. In
particular if $S=W^s(x)$, we abbreviate the induced measure as
$m_{s,x}$. Denote $m_{s,x}(A)$ the restricted submanifold measure
for a measurable subset $A\subseteq W^s(x)$. This should not be
confused with conditional measures. Let $\eta=\min\{\|Tf(v)\|:v\in
TM\text{ with }\|v\|=1\}$ and
$\overline{\nu}=\sup_{p\in\Lambda}\nu(p)$. Clearly
$0<\eta\le\nu(p)\le\overline{\nu}<1$ by compactness. For each $p\in
\Lambda$ we let $p_i=f^ip$ for $i\in\mathbb{Z}$, $\nu_0(p)=1$ and
$\nu_n(p)=\nu(p_{n-1})\cdots\nu(p_0)$ for $n\ge1$. Let
$B^s_n(p)=f^nW^s(p_{-n},\delta)$. Since $\Lambda$ is $f$-invariant,
we have $B^s_n(p)\subset W^s(p,\delta\!\cdot\! \nu_n(p_{-n}))$.

Since each stable manifold is a smooth submanifold of the Riemannian
manifold $M$ and $f$ is $C^r$ for $r>1$, the {\it stable Jacobian}
$J^s(f,x)$ of the restricted map $Tf:E^s_x\rightarrow E^s_{x_1}$ is
well defined and H\"older continuous with uniform H\"older exponent
and H\"older constant. That is, there exist $\alpha>0$ and $C_0>0$
such that for any $p\in\Lambda$ and $x,y\in W^s(p,\delta)$ we have
$|J^s(f,x)-J^s(f,y)|\le C_0 d(x,y)^\alpha$. Also there exists
$J^*\ge1$ such that $1/J^*\le J^s(f,x)\le J^*$ for all $x\in
W^s(p,\delta)$ and $p\in\Lambda$. Decreasing $\delta$ again if
necessary we assume $C_1=\prod_{k=0}^{\infty}\frac{(1+J^*C_0
\delta^\alpha \overline{\nu}^{k\alpha})}{(1-J^*C_0 \delta^\alpha
\overline{\nu}^{k\alpha})}<\infty$.

Let $B^s_n(p)=f^nW^s(f^{-n}p,\delta)$ for each $p\in\Lambda$,
$n\ge0$ and $\mathcal{S}=\{B^s_n(p):n\ge0,p\in\Lambda\}$ be the
stable basis. It is easy to see that $\{B^s_n(p):n\ge0\}$ forms a
nesting sequence of neighborhoods of $p\in\Lambda$ relative to
$W^s(p,\delta)$ and $B^s_n(p)$ shrinks to $p$ as
$n\rightarrow\infty$. Note that the basis here is in leafwise sense
and may has infinite eccentricity. The proposition below states that
the stable basis $\mathcal{S}$ behaves well in the sense of
\cite{PS}:
\begin{proposition}\label{basis}
The following properties hold for stable basis $\mathcal{S}$:
\begin{enumerate}
\item For any $p\in\Lambda$, $m_{s,p}(B^s_n(p))\rightarrow0$
if and only if $n\rightarrow\infty$.

 \item For any $k\ge0$, there exists $c_k\ge1$ such that
$\frac{m_{s,p}(B^s_n(p))} {m_{s,p}(B^s_{n+k}(p))}\le c_k$ for all
$p\in\Lambda$, $n\ge0$.

\item There exists $L\in\mathbb{N}$ such that for any $p,q\in\Lambda$,
$n\ge0$, if $B^s_{n+L}(p)\cap B^s_{n+L}(q)\neq\emptyset$, then
$B^s_{n+L}(p)\cup B^s_{n+L}(q)\subseteq B^s_{n}(p)\cap B^s_{n}(q)$.
\end{enumerate}
\end{proposition}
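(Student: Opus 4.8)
The plan is to handle the three items in turn, in each case exploiting the exponential contraction along stable leaves together with the bounded distortion packaged into the constant $C_1$.

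For item (1) I would first record the nesting $B^s_{n+1}(p)\subseteq B^s_n(p)$, which follows from $fW^s(f^{-(n+1)}p,\delta)\subseteq W^s(f^{-n}p,\delta)$ and the invariance of $\Lambda$; hence $n\mapsto m_{s,p}(B^s_n(p))$ is nonincreasing. Each $B^s_n(p)$ is the diffeomorphic image of the $\delta$-ball $W^s(f^{-n}p,\delta)$, so $m_{s,p}(B^s_n(p))>0$ for every finite $n$. Since $B^s_n(p)\subseteq W^s(p,\delta\cdot\nu_n(p_{-n}))\subseteq W^s(p,\delta\overline{\nu}^{n})$, the leaf-diameter of $B^s_n(p)$ is at most $2\delta\overline{\nu}^{n}\to0$, and the leaf-volume of a shrinking ball tends to $0$ uniformly; thus $m_{s,p}(B^s_n(p))\to0$ as $n\to\infty$. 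Being a nonincreasing sequence of strictly positive numbers tending to $0$, it satisfies $m_{s,p}(B^s_n(p))\to0$ if and only if $n\to\infty$.

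For item (2) I would set $q=f^{-n}p$, so that $B^s_n(p)=f^nW^s(q,\delta)$ and $B^s_{n+k}(p)=f^nB^s_k(q)$ with $B^s_k(q)\subseteq W^s(q,\delta)$. The change of variables along the leaf then gives
\[
\frac{m_{s,p}(B^s_n(p))}{m_{s,p}(B^s_{n+k}(p))}=\frac{\int_{W^s(q,\delta)}J^s(f^n,y)\,dm_{s,q}(y)}{\int_{B^s_k(q)}J^s(f^n,y)\,dm_{s,q}(y)}.
\]
The crux is the uniform bounded distortion estimate $J^s(f^n,y)/J^s(f^n,y')\le C_1$ valid for all $y,y'\in W^s(q,\delta)$ and all $n$. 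I would obtain it by comparing each point to the center $q$: writing $J^s(f^n,y)=\prod_{i=0}^{n-1}J^s(f,f^iy)$, using $1/J^*\le J^s\le J^*$, the H\"older bound $|J^s(f,a)-J^s(f,b)|\le C_0 d(a,b)^\alpha$, and the contraction $d(f^iy,f^iq)\le\overline{\nu}^{i}\delta$, one gets $J^s(f,f^iy)/J^s(f,f^iq)\le 1+J^*C_0\delta^\alpha\overline{\nu}^{i\alpha}$ and $J^s(f,f^iq)/J^s(f,f^iy')\le(1-J^*C_0\delta^\alpha\overline{\nu}^{i\alpha})^{-1}$; multiplying over $i$ produces exactly the convergent product defining $C_1$. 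Feeding this into the displayed ratio, together with the uniform bounds $0<V_{\min}\le m_{s,x}(W^s(x,\delta))\le V_{\max}<\infty$ and the Jacobian lower bound $m_{s,q}(B^s_k(q))\ge (J^*)^{-k}V_{\min}$, yields $c_k=C_1(V_{\max}/V_{\min})(J^*)^k$, which depends only on $k$.

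For item (3) the naive comparison of inradius and outradius fails, since the basis may have infinite eccentricity; instead I would use the dynamics to collapse all scales to a single one. Applying the bijection $f^{-n}$ and writing $p'=f^{-n}p,\ q'=f^{-n}q\in\Lambda$, one checks $f^{-n}B^s_{n+L}(p)=B^s_L(p')$ and $f^{-n}B^s_n(p)=W^s(p',\delta)$, so the assertion is equivalent, for all $p',q'\in\Lambda$, to: if $B^s_L(p')\cap B^s_L(q')\neq\emptyset$ then $B^s_L(p')\cup B^s_L(q')\subseteq W^s(p',\delta)\cap W^s(q',\delta)$. Since $B^s_L(\cdot)\subseteq W^s(\cdot,\delta\overline{\nu}^{L})$, a common point $z$ forces $W^s(p')=W^s(q')$ by the lamination property and gives $d(p',q')\le 2\delta\overline{\nu}^{L}$ in leaf-distance; then for any $w\in B^s_L(q')$ the triangle inequality gives $d(p',w)\le 3\delta\overline{\nu}^{L}$, which is $\le\delta$ once $\overline{\nu}^{L}\le 1/3$. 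Choosing $L$ with $\overline{\nu}^{L}\le1/3$ (independent of $n,p',q'$) completes the argument. The main obstacle I expect is precisely the uniform distortion in item (2): controlling $J^s(f^n,\cdot)$ simultaneously for all $n$ is what the adapted metric, the H\"older continuity of the stable Jacobian, and the convergent product $C_1$ were arranged to deliver, while the infinite-eccentricity difficulty in item (3) is sidestepped by the $f^{-n}$ reduction to a single scale.
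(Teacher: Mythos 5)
Your proof is correct and follows exactly the route the paper intends: the paper does not write out a proof of this proposition at all (it defers to \cite{PS} and \cite{X}, remarking only that ``the proof mainly uses distortion estimates''), and your argument is precisely such a distortion argument, correctly deploying the constants $C_0$, $\alpha$, $J^*$, $\overline{\nu}$ and the convergent product $C_1$ that the paper sets up just before the statement for this very purpose. In particular, the bound $c_k=C_1(V_{\max}/V_{\min})(J^*)^k$ in item (2) and the $f^{-n}$-reduction to a single scale with $\overline{\nu}^{L}\le 1/3$ in item (3) are both sound.
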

The properties listed above appeared in \cite{PS} (in a general
setting) and is named to be {\it volumetrically engulfing} (also see
\cite{X} for example). The proof mainly uses distortion estimates.

Let $A\in\mathcal{B}_{\Lambda}$ be a measurable subset of $\Lambda$.
Recall that a point $x\in A$ is said to be a $\mathcal{S}$-density
point of $A$ if
\begin{equation}\label{density}
\lim_{n\to\infty}\frac{m_{s,p}(B^s_n(p)\cap
A)}{m_{s,p}(B^s_n(p))}=1.
\end{equation}
For different $\delta$'s, the induced stable bases are {\it
internested} (see \cite[Lemma 2.1]{BW} for details). So the
definition of $\mathcal{S}$-density point is independent of the
choice of the size of stable manifolds and the choice of adapted
Riemannian metric on $M$. Let $A^d$ be the set of
$\mathcal{S}$-density points of $A$.

For each $A\in\mathcal{B}_{\Lambda}$ and each $p\in\Lambda$, $A\cap
W^s(p,\delta)$, the intersection of two Borel measurable subsets, is
a Borel measurable subset of the submanifold $W^s(p,\delta)$. (Note
that if $A$ is Lebesgue measurable, above relation will hold for
$m$-a.e. $p\in\Lambda$ by Fubini Theorem.) Let us denote $A^d_p$ the
set of $\mathcal{S}$-density points of $A\cap W^s(p,\delta)$.
Clearly we have $A^d=\bigcup_{p\in\Lambda}A^d_p$.
\begin{proposition}\label{dup}
Let $f\in\mathrm{Diff}^{r}(M)$ for some $r>1$ and $\Lambda$ be a
partially hyperbolic set with positive measure. For each subset
$A\in\mathcal{B}_{\Lambda}$, we have
\begin{enumerate}
\item for each $p\in\Lambda$, $m_{s,p}$-a.e. point in
$W^s(p,\delta)\cap A$ is a $\mathcal{S}$-density point of $A$:
$m_{s,p}(W^s(p,\delta)\cap A\backslash A^d_p)=0$.

\item $m$-a.e. point of $A$ is a $\mathcal{S}$-density
point of $A$: $m(A\backslash A^d)=0$.
\end{enumerate}
Moreover if $A\in\mathcal{B}_{\Lambda}$ is $f$-invariant, so is
$A^d$.
\end{proposition}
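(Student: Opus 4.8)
The plan is to establish the three assertions in order, treating the leafwise statement (1) as the analytic core, deducing the global statement (2) from it by a Fubini argument built on the absolute continuity of $\mathcal{W}^s$, and obtaining the invariance of $A^d$ from the equivariance of the stable basis under $f$. Throughout I would fix a single leaf $W=W^s(p,\delta)$ with leaf measure $\mu=m_{s,p}$, and regard $\{B^s_n(x):x\in W,\,n\ge 0\}$ as a family of subsets of $W$; note that $W^s(x)=W^s(p)$ for every $x\in W$, so all these boxes lie in the one leaf and are measured by the same $\mu$. The only inputs are the three properties of Proposition \ref{basis} — fineness (1), the doubling bound (2) with constants $c_k$, and the engulfing property (3) with its integer $L$ — together with the distortion estimates $1/J^*\le J^s\le J^*$ and the constant $C_1$ recorded before Proposition \ref{basis}.

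For assertion (1) the heart of the matter is a Vitali-type covering lemma for $\mathcal{S}$. Given $E\subseteq W$ and a fine cover of $E$ by boxes (for each $x\in E$ the cover contains boxes $B^s_n(x)$ of arbitrarily large generation), I would greedily select, generation by generation, a disjoint subfamily $\{B^s_{n_i}(x_i)\}$; the engulfing property (3) is exactly what forces any unselected box meeting a selected one of no larger generation to be swallowed by the $L$-predecessor $B^s_{n_i-L}(x_i)$, and the doubling bound (2) then gives $\mu(B^s_{n_i-L}(x_i))\le c_L\,\mu(B^s_{n_i}(x_i))$, so the mild enlargements still cover $E$ at the fixed cost $c_L$. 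Covering a definite fraction of $E$ this way and iterating on the uncovered remainder yields the full Vitali conclusion: a disjoint subfamily with $\mu(E\setminus\bigcup_i B^s_{n_i}(x_i))=0$. With this lemma the density theorem follows from the standard contraction argument. Fixing $\lambda<1$ and setting $Z_\lambda=\{x\in A\cap W:\liminf_n \mu(B^s_n(x)\cap A)/\mu(B^s_n(x))<\lambda\}$, I would choose by outer regularity an open $U\supseteq Z_\lambda$ in $W$ with $\mu(U)<\mu(Z_\lambda)+\varepsilon$, cover $Z_\lambda$ in the Vitali sense by boxes of $A$-ratio below $\lambda$ that lie inside $U$ (possible since such boxes shrink into the open set $U$), extract a disjoint subfamily exhausting $Z_\lambda$ up to a $\mu$-null set, and estimate $\mu(Z_\lambda)\le\sum_i\mu(B_i\cap A)<\lambda\sum_i\mu(B_i)\le\lambda\,\mu(U)<\lambda(\mu(Z_\lambda)+\varepsilon)$; letting $\varepsilon\to 0$ gives $\mu(Z_\lambda)=0$, and taking $\lambda\uparrow 1$ proves $m_{s,p}(W\cap A\setminus A^d_p)=0$. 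The main obstacle is precisely the covering lemma: the boxes $B^s_n(p)$ are not metric balls and, as the text notes, may have infinite eccentricity, so Besicovitch- or metric-Vitali arguments are unavailable and the selection must be run entirely through the combinatorial engulfing. The delicate point is that (3) compares boxes of equal generation, whereas the density of $A$ may fail at different scales for different points; reconciling these variable scales is where one must lean on the distortion estimates underlying (3), which give the cross-scale engulfing that a small box meeting a box of lower generation lies in the latter's bounded, measure-comparable enlargement.

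For assertion (2) I would pass from the leaf to the ambient volume using the absolute continuity of $\mathcal{W}^s$, guaranteed here by the H\"older continuity of $E^s$. Disintegrating $m$ restricted to $\Lambda$ along a measurable partition into stable plaques, absolute continuity makes the conditional measures equivalent to the leaf volumes $m_{s,p}$. Since $A^d=\bigcup_p A^d_p$, on the leaf through $p$ one has $(A\setminus A^d)\cap W^s(p,\delta)\subseteq (A\cap W^s(p,\delta))\setminus A^d_p$, which is $m_{s,p}$-null by assertion (1) and hence null for the equivalent conditional measure. Writing $m(A\setminus A^d)$ as the integral over the transverse of these conditional masses, the integrand vanishes for a.e. plaque, so $m(A\setminus A^d)=0$.

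Finally, for the invariance I would use that the basis is $f$-equivariant, $f(B^s_n(p))=B^s_{n+1}(fp)$, which follows directly from $B^s_n(p)=f^nW^s(f^{-n}p,\delta)$. If $A$ is $f$-invariant, then $B^s_n(fx)\cap A=f(B^s_{n-1}(x)\cap A)$, and since $J^s$ is H\"older and therefore nearly constant on the shrinking box $B^s_{n-1}(x)$, bounded distortion (the constants $J^*$ and $C_1$) shows the defining ratio at $fx$ differs from the ratio at $x$ by a factor tending to $1$ as $n\to\infty$. Hence the two limits coincide, so $x\in A^d\Leftrightarrow fx\in A^d$, proving that $A^d$ is $f$-invariant.
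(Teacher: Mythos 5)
Your proposal is correct and follows the paper's route exactly: the leafwise statement is derived from the volumetric engulfing properties of Proposition \ref{basis}, the ambient statement follows from absolute continuity of $\mathcal{W}^s$ together with the relation $A^d=\bigcup_{p}A^d_p$, and invariance comes from the $f$-equivariance of the stable basis and the smoothness of $f$ along leaves. The only difference is one of self-containedness: the Vitali covering lemma and contraction argument you carry out by hand for item (1) is precisely the content of Theorem 3.1 of \cite{PS}, which the paper simply cites at that step.
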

\begin{proof}
The first item follows by applying Theorem 3.1 in \cite{PS} to
stable basis $\mathcal{S}$ to each intersection $A\cap
W^s(p,\delta)$. Proposition \ref{basis} ensures that $\mathcal{S}$
forms a density basis in this leafwise sense.

Using the absolute continuity of the stable foliation
$\mathcal{W}^s$ and the relation $A^d=\bigcup_{p\in\Lambda}A^d_p$,
we have $m(A\backslash A^d)=0$. Hence $\mathcal{S}$ also forms a
density basis in the ambient sense.

For the last item, we note that each local leaf $W^s(s,\delta)$ is a
$C^r$ submanifold of $M$ and the restriction of $f$ between local
stable manifolds is diffeomorphic onto its image. So $p\in\Lambda$
is a $\mathcal{S}$-density point of $A\cap W^s(p,\delta)$ (or
equally, of $A$) if and only if $fp$ is a $\mathcal{S}$-density
point of $A\cap W^s(fp,\delta)$.
\end{proof}

\section{Topological structure of partially hyperbolic sets}

In this section we give some descriptions of the topological
structure of partially hyperbolic sets with positive volume. As in
Section $2$ we let $M$ be a connected closed manifold,
$f\in\mathrm{Diff}^r(M)$ for some $r>1$ and $\Lambda$ a partially
hyperbolic set with positive volume.

Given a Borel subset $A\subset \Lambda$, we consider a family of
functions $\eta_n$ on $\Lambda$ as
$$\eta_n(x)=m_{s,x}(B^s_n(x)\backslash A)/m_{s,x}(B^s_n(x)).$$
The following result shows the increasing occupation of an invariant
set $A$ in the local stable manifolds along the backward iterates of
an $\mathcal{S}$-density point of $A$.
\begin{lemma}\label{distortion}
There exists a constant $C\ge1$ such that given an $f$-invariant
subset $A\in\mathcal{B}_{\Lambda}$,
$m_{s,x_{-n}}(W^s(x_{-n},\delta)\backslash A) \le C_2\cdot\eta_n(x)$
for each $x\in\Lambda$ and $n\ge0$.
\end{lemma}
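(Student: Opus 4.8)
The plan is to transport the leaf-measurement at the backward iterate $x_{-n}$ to its forward image $B^s_n(x)=f^nW^s(x_{-n},\delta)$ by a change of variables along the stable leaf, and to absorb the resulting distortion into the uniform constant $C_1$ already fixed in Section 2. The starting point is that $f$-invariance of $A$ converts the complement on the pulled-back leaf into the complement on $B^s_n(x)$: since $f^n$ restricts to a diffeomorphism of $W^s(x_{-n},\delta)$ onto $B^s_n(x)$ and $f^n(A)=A$, we have $f^n\bigl(W^s(x_{-n},\delta)\setminus A\bigr)=B^s_n(x)\setminus A$. Thus it suffices to compare the $m_{s,x_{-n}}$-measure of a subset $S\subseteq W^s(x_{-n},\delta)$ with the $m_{s,x}$-measure of $f^nS$.

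First I would invoke the change-of-variables formula along the stable leaves together with the chain rule for the stable Jacobian:
\begin{equation*}
m_{s,x}(f^nS)=\int_S J^s(f^n,y)\,dm_{s,x_{-n}}(y),\qquad J^s(f^n,y)=\prod_{k=0}^{n-1}J^s(f,f^ky).
\end{equation*}
The key step is then the bounded distortion estimate. For $y,y'\in W^s(x_{-n},\delta)$ the contraction (\ref{stable}) gives $d(f^ky,f^ky')\le\overline{\nu}^{\,k}\delta$, so the H\"older bound $|J^s(f,a)-J^s(f,b)|\le C_0\,d(a,b)^\alpha$ and the lower bound $J^s(f,\cdot)\ge 1/J^*$ yield
\begin{equation*}
\frac{J^s(f^n,y)}{J^s(f^n,y')}=\prod_{k=0}^{n-1}\frac{J^s(f,f^ky)}{J^s(f,f^ky')}\le\prod_{k=0}^{\infty}\frac{1+J^*C_0\delta^\alpha\overline{\nu}^{\,k\alpha}}{1-J^*C_0\delta^\alpha\overline{\nu}^{\,k\alpha}}=C_1,
\end{equation*}
uniformly in $n$ and $x$, so that $J^s(f^n,\cdot)$ stays within a factor $C_1$ of its extrema on the leaf.

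Applying this with $S=W^s(x_{-n},\delta)\setminus A$ and comparing with the integral over the whole leaf gives
\begin{equation*}
\eta_n(x)=\frac{m_{s,x}(B^s_n(x)\setminus A)}{m_{s,x}(B^s_n(x))}\ge\frac{1}{C_1}\cdot\frac{m_{s,x_{-n}}(W^s(x_{-n},\delta)\setminus A)}{m_{s,x_{-n}}(W^s(x_{-n},\delta))}.
\end{equation*}
Finally, since the local leaves $W^s(p,\delta)$ vary $\alpha$-H\"older continuously over the compact set $\Lambda$, their leaf-volumes are uniformly bounded, say $V:=\sup_{p\in\Lambda}m_{s,p}(W^s(p,\delta))<\infty$. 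Rearranging and setting $C_2=\max\{1,C_1V\}$ yields $m_{s,x_{-n}}(W^s(x_{-n},\delta)\setminus A)\le C_2\cdot\eta_n(x)$, as claimed.

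I expect the only genuine obstacle to be the bounded distortion step, but this is essentially already packaged into the constant $C_1$ fixed in Section 2 (the same estimate underlying Proposition \ref{basis}); the remaining ingredients are the change of variables, the $f$-invariance bookkeeping, and the compactness bound on leaf-volumes, all of which are routine.
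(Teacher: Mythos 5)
Your proof is correct and is essentially the paper's own argument written out in full: the paper's proof simply defers to Xia's bounded-distortion estimate (\cite[Lemma 3.2]{X}), which is precisely the change-of-variables plus H\"older--Jacobian distortion computation you give, and your final constant $C_1\cdot V$ plays exactly the role of the paper's $C_2=C_5\cdot\max_{p\in\Lambda}m_{s,p}(W^s(p,\delta))$. The only microscopic discrepancy is that two points of $W^s(x_{-n},\delta)$ may be at leafwise distance up to $2\delta$ rather than $\delta$, so your distortion factor is really $C_1$ computed with $(2\delta)^\alpha$ in place of $\delta^\alpha$; this affects nothing but the name of the constant.
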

\begin{proof}
We only need to adapt the notations in \cite[Lemma 3.2]{X}, since
the proof is essentially the same. Let $A$ be an invariant set and
$x\in\Lambda$ be fixed. Let $B^k_n=f^kW^s(x_{-n},\delta)$ and
$D_n^k=B^k_n\backslash A$ for each $0\le k\le n$. Note that
$B^0_n=W^s(x_{-n},\delta)$ is a local stable leaf and
$B^n_n=B^s_n(x)$ is an element in the stable basis $\mathcal{S}$.
Then using the constant $C_5$ given by \cite[Page 816]{X}, we have
$m_{s,x_{-n}}(D_n^{0}) \le C_5\cdot\eta_n(x)\cdot
m_{s,x_{-n}}(B_n^{0})$. Applying $B_n^{0}=W^s(x_{-n},\delta)$ and
$D_n^{0}=W^s(x_{-n},\delta)\backslash A$, we finish the proof with a
uniform constant $C_2=C_5\cdot\max_{p\in
\Lambda}m_{s,p}(W^s(p,\delta))$.
\end{proof}

Recall that $\alpha(x)$, the $\alpha$-set of $x$, is the set of
accumulating points along the backward orbit $\{x,f^{-1}x,\cdots\}$.
Let ${\alpha}(E)$ be the closure of $\bigcup_{x\in E}\alpha(x)$.
Note that for each point $x\in\Lambda$ and each subset $E\subset
\Lambda$, the sets $\alpha(x)$ and ${\alpha}(E)$ are compact
$f$-invariant subsets of $\Lambda$.
\begin{theorem}
Let $f\in\mathrm{Diff}^{r}(M)$ for some $r>1$ and $\Lambda$ a
partially hyperbolic set with positive volume. Then $\Lambda$
contains the global stable manifolds over ${\alpha}(\Lambda^d)$.
\end{theorem}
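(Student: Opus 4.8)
The plan is to reduce the global statement to a local one using the $f$-invariance of both $\Lambda$ and $\alpha(\Lambda^d)$, and to prove the local statement by combining the distortion estimate of Lemma \ref{distortion} with a compactness argument. First I would apply Lemma \ref{distortion} to the $f$-invariant set $A=\Lambda$ itself. If $x\in\Lambda^d$, then by definition $x$ is an $\mathcal{S}$-density point of $\Lambda$, so $\eta_n(x)=m_{s,x}(B^s_n(x)\backslash\Lambda)/m_{s,x}(B^s_n(x))\to0$ as $n\to\infty$. Lemma \ref{distortion} then yields $m_{s,x_{-n}}(W^s(x_{-n},\delta)\backslash\Lambda)\le C_2\cdot\eta_n(x)\to0$; in words, the local stable leaves centered along the backward orbit of a density point are asymptotically filled up by $\Lambda$.

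Next I would establish the local containment $W^s(y,\delta)\subset\Lambda$ for every $y\in\alpha(\Lambda^d)$. It suffices to treat $y\in\alpha(x)$ for some $x\in\Lambda^d$, since the containment of a local leaf in the closed set $\Lambda$ is a closed condition in the base point (local stable leaves vary continuously with the base point) and therefore passes to the closure defining $\alpha(\Lambda^d)$. So fix $x\in\Lambda^d$ and $y\in\alpha(x)$, and choose $n_k\to\infty$ with $x_{-n_k}\to y$. Suppose for contradiction that some $z\in W^s(y,\delta)\backslash\Lambda$. Since $\Lambda$ is compact, hence closed, there is an open set $U\ni z$ with $U\cap\Lambda=\emptyset$, and $W^s(y,\delta)\cap U$ has positive leaf-volume, say at least $2\rho>0$. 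Because $x_{-n_k}\to y$, the leaves $W^s(x_{-n_k},\delta)$ converge to $W^s(y,\delta)$ in the $C^1$ topology, so the induced leaf-volumes converge and $m_{s,x_{-n_k}}(W^s(x_{-n_k},\delta)\cap U)\ge\rho$ for all large $k$. As $U$ is disjoint from $\Lambda$, this forces $m_{s,x_{-n_k}}(W^s(x_{-n_k},\delta)\backslash\Lambda)\ge\rho>0$, contradicting the conclusion of the first step along the subsequence $n_k$. Hence $W^s(y,\delta)\subset\Lambda$.

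Finally I would upgrade from local to global. As already noted in the text, $\alpha(\Lambda^d)$ is a compact $f$-invariant subset of $\Lambda$; in particular $f^n y\in\alpha(\Lambda^d)$ for every $n\ge0$ and every $y\in\alpha(\Lambda^d)$, so the local step gives $W^s(f^n y,\delta)\subset\Lambda$. Writing the global stable manifold as $W^s(y)=\bigcup_{n\ge0}f^{-n}W^s(f^n y,\delta)$ and using the $f$-invariance of $\Lambda$ (so that $f^{-n}\Lambda=\Lambda$), each set $f^{-n}W^s(f^n y,\delta)$ lies in $\Lambda$, whence $W^s(y)\subset\Lambda$, as required.

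The main obstacle is the limiting measure estimate in the middle step: one must know that $C^1$-convergence of the local leaves $W^s(x_{-n_k},\delta)\to W^s(y,\delta)$ forces lower semicontinuity of the leaf-volume of the intersection with the fixed open set $U$. This is where the uniform H\"older continuity of the stable lamination in the base point and the uniform $C^r$ bounds on individual leaves enter: parametrizing all nearby leaves as graphs over $E^s_y$ in a normal chart, the graphs converge with uniformly bounded Jacobians, which makes the leaf-volumes converge on relatively compact open subsets and delivers the bound $m_{s,x_{-n_k}}(\,\cdot\cap U)\ge\rho$.
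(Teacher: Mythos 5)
Your proposal is correct and follows essentially the same route as the paper: both apply Lemma \ref{distortion} with $A=\Lambda$ to make the leaf-complement volume $m_{s,x_{-n}}(W^s(x_{-n},\delta)\backslash\Lambda)$ vanish along the backward orbit of a density point, then use continuity of the local stable leaves in the base point to pass to the limit leaf $W^s(y,\delta)$, and finally invoke invariance of $\Lambda$ and $\alpha(\Lambda^d)$ to globalize. The only difference is cosmetic: the paper phrases the limiting step directly, extracting $\tfrac{1}{i}$-dense subsets of $W^s(x_{-n_i},\delta)\cap\Lambda$ and taking a $\limsup$, whereas you argue by contradiction via lower semicontinuity of leaf volume on an open set disjoint from $\Lambda$ --- two formulations of the same estimate.
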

\begin{proof}
First let us consider $y\in\alpha(x)$ for some $x\in\Lambda^d$. Pick
a sequence of times $n_i\to+\infty$ such that $x_{-n_i}\to y$
(clearly all these points are in $\Lambda$). By Lemma
\ref{distortion} we have $m_{s,x_{-n}}(W^s(x_{-n},\delta)\backslash
\Lambda)\le C_2\eta_n(x)$. (Note that $\eta_n(x)\to 0$ as
$n\to\infty$.) Passing to a subsequence if necessary, we can assume
that $W^s(x_{-n_i},\delta)\cap\Lambda$ contains a
$\frac{1}{i}$-dense subset $E_{x_{-n_i},i}$ of
$W^s(x_{-n_i},\delta)$. Let
$E=\limsup_{i\to\infty}E_{x_{-n_i},i}=\bigcap_{k\ge1}\overline{\bigcup_{i\ge
k}E_{x_{-n_i},i}}$. It is clear that $E\subset\Lambda$ since
$\Lambda$ is compact. By continuity of the stable manifolds, $E$
contains a dense subset of $W^s(y,\delta)$, and hence
$W^s(y,\delta)\subset E$. So $W^s(y,\delta)\subset \Lambda$ for each
$y\in\alpha(x)$ and each $x\in\Lambda^d$.

Still by the compactness of $\Lambda$, $W^s(y,\delta)\subset\Lambda$
for each $y\in {\alpha}(\Lambda^d)$. By the invariance of $\Lambda$
and ${\alpha}(\Lambda^d)$, the global stable manifolds
$W^s(y)\subset\Lambda$ for each $y\in {\alpha}(\Lambda^d)$.
\end{proof}

Similarly we consider the $\omega$-set and define
${\omega}(\Lambda^d)$. For a strongly partially hyperbolic set we
have
\begin{theorem}\label{main}
Let $f\in\mathrm{Diff}^{r}(M)$ for some $r>1$ and $\Lambda$ a
strongly partially hyperbolic set with positive volume. Then
$\Lambda$ contains the global stable manifolds over
${\alpha}(\Lambda^d)$ and the global unstable manifolds over
${\omega}(\Lambda^d)$.
\end{theorem}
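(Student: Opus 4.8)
The plan is to reduce the unstable assertion to the stable one already established in the preceding theorem, by passing to the inverse map $f^{-1}$. The first assertion, that $W^s(x)\subseteq\Lambda$ for every $x\in\alpha(\Lambda^d)$, is exactly the content of the previous theorem, since a strongly partially hyperbolic set is in particular partially hyperbolic of positive volume; nothing new is required there. For the unstable assertion the key observation is that the entire density-basis machinery of Section $2$ is intrinsic to the contracting bundle, and so applies verbatim to $f^{-1}$ once the roles of $E^s$ and $E^u$ are interchanged.

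Concretely, I would first check that $f^{-1}\in\mathrm{Diff}^r(M)$ makes $\Lambda$ a strongly partially hyperbolic set of positive volume, now with $E^u$ as the contracting (``stable'') bundle of $f^{-1}$ and $E^s$ as its expanding bundle: reading the chain of inequalities \eqref{partial} backward shows that $Tf^{-1}$ uniformly contracts $E^u$ and expands $E^s$, and that the same $g$ is an adapted metric for $f^{-1}$ (with the rate functions $\nu,\tilde\nu,\gamma,\tilde\gamma$ suitably relabelled along the orbit). Consequently the unstable lamination $\mathcal{W}^u$ of $f$ is the stable lamination of $f^{-1}$, the local unstable leaves $W^u(\cdot,\delta)$ take the place of $W^s(\cdot,\delta)$, the unstable Jacobian is H\"older with uniform exponent and constant, and $\mathcal{W}^u$ is absolutely continuous. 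Hence Proposition \ref{basis}, Proposition \ref{dup}, and Lemma \ref{distortion} all hold for $f^{-1}$, producing an unstable density basis $\mathcal{U}$ and its associated set of $\mathcal{U}$-density points. Applying the preceding theorem to $f^{-1}$ then gives that $\Lambda$ contains the global stable manifolds of $f^{-1}$ over $\alpha_{f^{-1}}$ of its density set. Translating back to $f$, the stable manifolds of $f^{-1}$ are the global unstable manifolds $W^u(\cdot)$ of $f$, the $\alpha$-set of $f^{-1}$ is the $\omega$-set of $f$, and the $\mathcal{U}$-density points play the role of $\Lambda^d$; this yields $W^u(y)\subseteq\Lambda$ for every $y$ in $\omega$ of the unstable density set.

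The one point I would be careful about is the meaning of $\Lambda^d$ in the two assertions, which is really the only subtlety rather than a genuine obstacle: the stable statement uses the $\mathcal{S}$-density points, while the dual argument naturally produces the $\mathcal{U}$-density points. By Proposition \ref{dup} applied to both $f$ and $f^{-1}$, each of these two sets has full $m$-measure in $\Lambda$, so their intersection is again of full volume; taking $\Lambda^d$ to be this common set makes both assertions hold simultaneously for a single density set. All the analytic input (bounded distortion, absolute continuity of the laminations) is symmetric under time reversal, so I expect no difficulty beyond recording this identification and checking that the hypotheses of the preceding theorem indeed transfer to $f^{-1}$.
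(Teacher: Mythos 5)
Your proposal is correct and is essentially the paper's own (implicit) argument: the paper states Theorem \ref{main} with no separate proof, relying exactly on the time-reversal symmetry you spell out, namely applying the preceding stable-manifold theorem to $f^{-1}$, for which $E^u$ is the contracting bundle, $\mathcal{W}^u$ is the stable lamination, and $\alpha_{f^{-1}}=\omega_f$. Your explicit handling of the ambiguity in $\Lambda^d$ (intersecting the full-measure sets of $\mathcal{S}$-density and $\mathcal{U}$-density points) is a point the paper glosses over, and resolving it as you do is the right reading of the statement.
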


So every partially hyperbolic set with positive volume is far from
being a topological horseshoe-like set. Although the sets
${\alpha}(\Lambda^d)$ and ${\omega}(\Lambda^d)$ are always nonempty,
we do not know how large they could be and when they could intersect
with each other. This can be improved if we require that $\Lambda$
admits some recurrence.

\begin{definition}
A point $x$ is said to be {\it backward recurrent} if
$x\in\alpha(x)$. The definition of {\it forward recurrent} is
analogous. A point is said to be {\it recurrent} if it is both
backward and forward recurrent.
\end{definition}
\begin{definition}
Let $E$ be a measurable subset of $\Lambda$. Then $E$ is said to be
{\it $s$-saturated} if for each $x\in E$, $W^s(x)\subset E$.
Similarly we can define {\it $u$-saturated} sets. Then the set $E$
is {\it bi-saturated} if it is $s$-saturated and $u$-saturated.
\end{definition}

\begin{corollary}\label{sph}
Let $f\in\mathrm{Diff}^{r}(M)$ for some $r>1$ and $\Lambda$ be a
strongly partially hyperbolic set supporting some $acip$ $\mu$. Then
$\Lambda$ is bi-saturated.
\end{corollary}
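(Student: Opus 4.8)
The plan is to assemble the corollary from Theorem \ref{main} together with Poincaré recurrence and the absolute continuity of $\mu$, followed by a continuity-and-closedness argument to upgrade an almost-everywhere statement to an everywhere statement. First I would record the basic reductions. Since $\Lambda=\mathrm{supp}(\mu)$ we have $\mu(\Lambda)=1$, and because $\mu\ll m$ any set of zero volume is $\mu$-null; in particular $m(\Lambda)=0$ would force $\mu(\Lambda)=0$, so $m(\Lambda)>0$ and Theorem \ref{main} is available. Applying Proposition \ref{dup}(2) with $A=\Lambda$ gives $m(\Lambda\setminus\Lambda^d)=0$, whence $\mu(\Lambda\setminus\Lambda^d)=0$ and $\mu(\Lambda^d)=1$.

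Next I would produce a full-measure, bi-saturated core. Since $\mu$ is an $f$-invariant probability measure and $f$ is invertible, Poincaré recurrence yields that $\mu$-a.e.\ $x$ is both forward and backward recurrent, i.e.\ $x\in\alpha(x)\cap\omega(x)$. Intersecting with the full-measure set $\Lambda^d$, the set
\[
R=\{x\in\Lambda^d : x\in\alpha(x)\cap\omega(x)\}
\]
satisfies $\mu(R)=1$. For $x\in R$ we have $x\in\alpha(x)\subset{\alpha}(\Lambda^d)$ and $x\in\omega(x)\subset{\omega}(\Lambda^d)$, so Theorem \ref{main} gives $W^s(x)\subset\Lambda$ and $W^u(x)\subset\Lambda$; in particular $W^s(x,\delta)\subset\Lambda$ and $W^u(x,\delta)\subset\Lambda$ for every $x\in R$. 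Moreover $R$ is dense in $\Lambda$: since $\mu(R)=1$, every open set of positive $\mu$-measure meets $R$, and every point of $\mathrm{supp}(\mu)=\Lambda$ has only neighborhoods of positive $\mu$-measure, so $\Lambda\subset\overline{R}$.

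Finally I would pass from the dense set $R$ to all of $\Lambda$. Fix $p\in\Lambda$ and choose $p_k\in R$ with $p_k\to p$. The local stable lamination varies $\alpha$-H\"older continuously with the base point, so for each $q\in W^s(p,\delta)$ there are $q_k\in W^s(p_k,\delta)\subset\Lambda$ with $q_k\to q$; since $\Lambda$ is closed, $q\in\Lambda$, giving $W^s(p,\delta)\subset\Lambda$, and symmetrically $W^u(p,\delta)\subset\Lambda$. To reach the global manifolds I would use invariance: applying the local statement to each $f^np\in\Lambda$ gives $W^s(f^np,\delta)\subset\Lambda$, and since $\Lambda$ is $f$-invariant, $f^{-n}W^s(f^np,\delta)\subset\Lambda$ for all $n\ge0$; taking the union yields $W^s(p)\subset\Lambda$, and likewise $W^u(p)\subset\Lambda$. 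Thus $\Lambda$ is bi-saturated. The step I expect to be the crux is this last upgrade from a dense (full-measure) set to every point of $\Lambda$: it relies essentially on the continuous variation of the local laminations over $\Lambda$ and on the closedness of $\Lambda$, exactly the mechanism already exploited in the proof of the first theorem of this section. The preceding measure-theoretic steps (absolute continuity transfers the null set, Poincaré supplies recurrence) are routine by comparison.
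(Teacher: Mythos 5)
Your proof is correct and follows essentially the paper's own route: Poincar\'e recurrence together with Proposition \ref{dup} and $\mu\ll m$ produce a full-$\mu$-measure set of recurrent $\mathcal{S}$-density points, its density in $\mathrm{supp}(\mu)=\Lambda$ is the key step, and Theorem \ref{main} finishes the argument. The only difference is that your final continuity-and-closedness upgrade from the dense set $R$ to all of $\Lambda$ is redundant: since $\alpha(\Lambda^d)$ is by definition the \emph{closure} of $\bigcup_{x\in\Lambda^d}\alpha(x)$ (and is contained in the compact invariant set $\Lambda$), the density of $R$ already forces $\alpha(\Lambda^d)=\omega(\Lambda^d)=\Lambda$, so Theorem \ref{main} applies to every point of $\Lambda$ at once --- the continuity-of-laminations work you redo by hand is exactly what is already absorbed into the proof of that theorem.
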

\begin{proof}
By Poincar\'e recurrence theorem, we have that $\mu$-a.e.
$x\in\Lambda$ is recurrent, that is, $\mu(\mathrm{Rec}_{\Lambda})=1$
where $\mathrm{Rec}_{\Lambda}$ is the set of recurrent points in
$\Lambda$. Also we have $\mu(\Lambda\backslash \Lambda^d)=0$ since
$\mu\ll m$ and $m(\Lambda\backslash \Lambda^d)=0$. So
$\mu(\Lambda^d\cap\mathrm{Rec}_{\Lambda})=1$ and the closed set
${\alpha}(\Lambda^d)$ contains
$\Lambda^d\cap\mathrm{Rec}_{\Lambda}$, which is a subset of full
$\mu$-measure in $\Lambda$ and hence dense in
$\text{supp}\mu=\Lambda$. So ${\alpha}(\Lambda^d)=\Lambda$ and the
set $\Lambda$ is $s$-saturated by Theorem \ref{main}. Similarly we
can show $\Lambda$ is $u$-saturated. This completes the proof.
\end{proof}

\section{Regularity of $acip$: hyperbolic case.}\label{acip1}

In this section we consider the hyperbolic sets. We show that if a
hyperbolic set has positive $acip$-measure, then the map is a
transitive Anosov diffeomorphism. Then it is well known that the
$acip$ is not only equivalent to the volume, but also has smooth
density with respect to the volume. This motivates the
generalization to partial hyperbolic systems in next section.
\begin{theorem}\label{anosov}
Let $f\in\mathrm{Diff}^{r}(M)$ for some $r>1$, $\mu$ be an $acip$
and $\Lambda$ be a hyperbolic set with positive $\mu$-measure. Then
$\Lambda=M$ and $f$ is an transitive Anosov diffeomorphism on $M$.
\end{theorem}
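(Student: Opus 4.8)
The plan is to reduce to the situation of Corollary \ref{sph} and then exploit that a hyperbolic set has trivial center bundle, so that bi-saturation can be upgraded to openness.

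First I would pass from ``positive $\mu$-measure'' to ``support of an $acip$''. Since $\Lambda$ is compact and $f$-invariant, the normalized restriction $\mu'=\mu|_\Lambda/\mu(\Lambda)$ is again an $f$-invariant probability with $\mu'\ll m$, hence an $acip$, and $\Lambda_0:=\mathrm{supp}\,\mu'\subseteq\Lambda$ is a compact invariant subset of a hyperbolic set, so it is itself hyperbolic. It suffices to prove $\Lambda_0=M$, since then $\Lambda\supseteq\Lambda_0=M$. Next I record that both $E^s$ and $E^u$ are nontrivial on $\Lambda_0$: if, say, $E^u$ were trivial then $E^s_x=T_xM$ and $|\det T_xf|\le\overline{\nu}^{\,\dim M}<1$ on $\Lambda_0$, and feeding this into the invariance equation $\rho(fx)|\det T_xf|=\rho(x)$ for the density $\rho=d\mu'/dm$ forces $\rho(f^nx)\to\infty$ along $\mu'$-a.e.\ orbit, which is incompatible with $\rho\in L^1$ and Poincar\'e recurrence. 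Thus $\Lambda_0$ is strongly partially hyperbolic with trivial center, and Corollary \ref{sph} applies: $\Lambda_0$ is bi-saturated, so $W^s(p)\cup W^u(p)\subset\Lambda_0$ for every $p\in\Lambda_0$.

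The heart of the argument is to upgrade bi-saturation to openness, and this is exactly where the absence of a center bundle enters. Fix $p\in\Lambda_0$. By $u$-saturation the local disk $W^u(p,\delta)\subset\Lambda_0$, and for each $x\in W^u(p,\delta)$ the local disk $W^s(x,\delta)$ is defined, varies continuously in $x$, and lies in $\Lambda_0$ by $s$-saturation. Hence the box $U=\bigcup_{x\in W^u(p,\delta)}W^s(x,\delta)\subset\Lambda_0$. Because $E^s_p\oplus E^u_p=T_pM$ (no center), the local stable and unstable disks are transverse of complementary dimension, so $\dim W^s+\dim W^u=\dim M$; by the inverse function theorem together with the continuous variation of the leaves, $U$ contains an open neighborhood of $p$. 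Thus every point of $\Lambda_0$ is interior, i.e.\ $\Lambda_0$ is open. Being also compact and nonempty in the connected manifold $M$, it must be all of $M$; in particular $TM=E^s\oplus E^u$ everywhere, $f$ is Anosov, and $\Lambda=M$. For transitivity I would note that $M=\mathrm{supp}\,\mu'$ and every point in the support of an invariant measure is nonwandering (again by Poincar\'e recurrence), so $\Omega(f)=M$; the spectral decomposition theorem then writes $\Omega(f)$ as a finite disjoint union of basic sets, each open in $\Omega(f)$ and carrying a transitive restriction, and since $\Omega(f)=M$ is connected there is a single basic set, so $f$ is transitive on $M$.

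I expect the main obstacle to be making the local-product (openness) step fully rigorous: one must verify that the union of the $s$-disks over a $u$-disk genuinely contains a full-dimensional open set, using transversality of complementary-dimensional laminations together with their continuous (H\"older) dependence on the base point. This is precisely the feature that fails once a nontrivial center is present, which is why in the general partially hyperbolic setting one obtains only bi-saturation (Corollary \ref{sph}) rather than $\Lambda=M$.
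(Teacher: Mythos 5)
Your proposal is correct and takes essentially the same route as the paper: reduce to the case $\Lambda=\mathrm{supp}(\mu)$, invoke Corollary \ref{sph} to get bi-saturation, use the local product structure $B(x,\epsilon)\subset\bigcup_{y\in W^u(x,\delta)}W^s(y,\delta)$ (possible precisely because the center is trivial) to show $\Lambda$ is open as well as closed, conclude $\Lambda=M$ by connectedness, and obtain transitivity from $\Omega(f)=M$ together with the spectral decomposition theorem. Your extra check that neither $E^s$ nor $E^u$ can be trivial on the support of the restricted $acip$ (via the density invariance equation and Poincar\'e recurrence) is a degenerate case the paper leaves implicit, but it does not alter the structure of the argument.
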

\begin{proof}
By considering $\Lambda_\mu=\Lambda\cap\text{supp}(\mu)$ and
$\mu|_{\Lambda_\mu}$ if necessary, we can assume that
$\Lambda=\text{supp}(\mu)$. By Corollary \ref{sph}, we have that
$\Lambda$ is bi-saturated. By the uniform hyperbolicity of
$\Lambda$, there exists $\epsilon>0$ such that
$B(x,\epsilon)\subset\bigcup_{y\in
W^u(x,\delta)}W^s(y,\delta)\subset\Lambda$ for each $x\in\Lambda$.
So the set $\Lambda$ is both close and open, hence coincides with
the whole manifold $M$. Since the recurrent set is a dense subset of
$\text{supp}(\mu)=\Lambda=M$ and is contained in the nonwandering
set $\Omega(f)$, we have that $\Omega(f)=M$ and $f$ is a transitive
Anosov diffeomorphism on $M$ (by spectrum decomposition theorem, see
\cite{B1}).
\end{proof}
\begin{remark}\label{bernoulli}
Spectrum decomposition theorem actually implies that $f$ is mixing.
Moreover by Corollary 4.13 and Theorem 4.14 in \cite{B1}, $\mu$
coincides with the equilibrium state $\mu_{\phi^u}$ of the potential
$\phi^u(x)=-\log(J^u(f,x))$, and has H\"older continuous density
with respect to $m$. Furthermore the smooth measure $\mu$ is ergodic
and $Bernoulli$.
\end{remark}
\begin{remark}
The regularity of $f\in\mathrm{Diff}^{r}(M)$ for some $r>1$ is an
essential assumption in a two-fold sense. In \cite{RY} Robinson and
Young constructed a $C^1$ Anosov diffeomorphism with non-absolutely
continuous stable and unstable foliations, which does have some
closed invariant set with positive volume. In \cite{B} Bowen
constructed a $C^1$ horseshoe with positive volume and absolutely
continuous local stable and unstable laminations, where the bounded
distortion property in Lemma \ref{distortion} fails.
\end{remark}

\section{Regularity of $acip$: partially hyperbolic
case}\label{acip2}

In this section we show analogous results in Section \ref{acip1}
hold for accessible strongly partially hyperbolic systems. Namely,
let $f\in\mathrm{SPH}^r(M)$ for some $r>1$ be a $C^r$ strongly
partially hyperbolic diffeomorphism and $m$ be the volume measure
associated to some Riemannian metric adapted to the partially
hyperbolic splitting. Let $\mathcal{W}^s$ be the stable foliation
tangent to the stable bundle and $\mathcal{W}^u$ the unstable
foliation tangent to the unstable bundle.

\begin{definition}
Let $E$ be a measurable subset of $M$. Then $E$ is said to be {\it
essentially $s$-saturated} if there exists an $s$-saturated set
$\widehat{E}^s$ with $m(E\triangle \widehat{E}^s)=0$. Similarly we
can define {\it essentially $u$-saturated} sets. The set $E$ is {\it
essentially bi-saturated} if there exists a bi-saturated set
$\widehat{E}^{su}$ with $m(E\triangle \widehat{E}^{su})=0$, and {\it
bi-essentially saturated} if $E$ is essentially $s$-saturated and
essentially $u$-saturated.
\end{definition}

\begin{definition}
A strongly partially hyperbolic diffeomorphism $f: M\to M$ is said
to be {\it accessible} if each nonempty bi-saturated set is the
whole manifold $M$. The map $f$ is {\it essentially accessible} if
every measurable bi-saturated set has either full or zero volume.
\end{definition}

\begin{theorem}\label{tran}
Let $f\in\mathrm{SPH}^r(M)$ be essentially accessible. If there
exists some $acip$ for $f$, then the support of the $acip$ is the
whole manifold and the map $f$ is transitive.
\end{theorem}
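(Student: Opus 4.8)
The plan is to leverage Corollary \ref{sph} together with the accessibility hypothesis to force the support of the $acip$ to be all of $M$, and then to extract transitivity from the recurrence already built into any $acip$. First I would let $\mu$ be an $acip$ for $f$ and set $\Lambda=\mathrm{supp}(\mu)$. The key observation is that $\Lambda$ is a compact $f$-invariant set with $\mu(\Lambda)=1>0$, hence $m(\Lambda)>0$ since $\mu\ll m$ would otherwise force $\mu=0$; thus $\Lambda$ is a strongly partially hyperbolic set of positive volume supporting the $acip$ $\mu$. By Corollary \ref{sph}, $\Lambda$ is then bi-saturated, i.e. $W^s(x)\subset\Lambda$ and $W^u(x)\subset\Lambda$ for every $x\in\Lambda$.

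Next I would exploit essential accessibility. A bi-saturated set is in particular measurable and bi-saturated, so by the definition of essential accessibility $m(\Lambda)$ must be either $0$ or $1$. We already know $m(\Lambda)>0$, so $m(\Lambda)=1$. The remaining task is to upgrade "full volume and closed" to "all of $M$." Here I would argue that $\Lambda=\mathrm{supp}(\mu)$ is closed, and a closed set of full volume in a connected manifold must be the whole manifold: its complement is open and $m$-null, and since $m$ is a positive measure on a connected manifold any nonempty open set has positive volume, forcing the complement to be empty. Hence $\Lambda=M$, which gives $\mathrm{supp}(\mu)=M$ as claimed.

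For transitivity, I would invoke the recurrence structure already present in the proof of Corollary \ref{sph}. By the Poincar\'e recurrence theorem, $\mu$-a.e.\ point of $M$ is recurrent, so the recurrent set $\mathrm{Rec}$ has full $\mu$-measure and is therefore dense in $\mathrm{supp}(\mu)=M$. Since every recurrent point is nonwandering, we obtain $\Omega(f)=M$. Combined with bi-saturation, which lets stable and unstable manifolds through any point fill out the manifold, I expect the standard accessibility-plus-recurrence machinery (as in Brin's argument, cited as \cite{Br}) to yield that $f$ is transitive: accessibility allows one to connect any two open sets via $su$-paths, and the dense recurrence propagates this to genuine orbit transitivity.

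The main obstacle I anticipate is the transitivity step rather than the support computation. Showing $\mathrm{supp}(\mu)=M$ is essentially formal given Corollary \ref{sph} and the definition of essential accessibility. The delicate point is passing from "$\Omega(f)=M$ and $f$ is essentially accessible" to genuine topological transitivity (a dense forward orbit), because essential accessibility is only a measure-theoretic condition and does not a priori give the clean $su$-path connectivity that full accessibility provides. I would need to combine the bi-saturation of $\mathrm{supp}(\mu)$ with the density of recurrent points and an argument that essentially accessible systems carrying an $acip$ cannot decompose into nontrivially separated invariant open pieces; the cleanest route is likely to show that any two nonempty open sets meet a common $su$-saturated set of positive measure and then use recurrence to connect their orbits. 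This is where the real content of the theorem lies, and where I would expect the author's proof to do the substantive work.
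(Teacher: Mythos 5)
Your first half is correct and is exactly the paper's argument: $\mathrm{supp}(\mu)$ is a strongly partially hyperbolic set supporting the $acip$, Corollary \ref{sph} makes it bi-saturated, essential accessibility then forces $m(\mathrm{supp}(\mu))=1$, and a closed set of full volume in a connected manifold must be all of $M$.

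The gap is precisely where you anticipated it: the transitivity step, which in your proposal is only a sketch that you yourself flag as unresolved. The Brin-type ``accessibility plus recurrence'' machinery you invoke really does require genuine $su$-paths, i.e.\ accessibility, while the theorem assumes only essential accessibility, a measure-theoretic condition; note also that the paper points out (citing \cite{NT}) that even full accessibility alone does not give transitivity, so some use of the $acip$ is unavoidable. What you missed is that no path-connecting argument is needed at all: the paper settles transitivity by the same device used for the support, applied in contrapositive form. Suppose $f$ is not transitive; then there is a nonempty $f$-invariant open set $U$ with $M\setminus\overline{U}\neq\emptyset$, so $\Lambda=M\setminus U$ is closed, $f$-invariant, and has nonempty interior. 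Since $\mathrm{supp}(\mu)=M$ (the first part), $\mu(\Lambda)>0$, and the normalized restriction $\mu|_{\Lambda}$ is again an $acip$ supported inside $\Lambda$. Applying Corollary \ref{sph} to its support yields a bi-saturated set contained in $\Lambda$, so essential accessibility gives $m(\Lambda)=1$, i.e.\ $m(U)=0$, contradicting that $U$ is nonempty and open. Thus the ``substantive work'' you expected the author to do is in fact a two-line reuse of the bi-saturation corollary; your proposal, as written, leaves the transitivity claim unproved.
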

Before the proof, we mention that there exists a $C^1$ open set of
accessible but non-transitive diffeomorphisms (see \cite{NT}).
\begin{proof}
Let $\mu$ be an $acip$ of $f$. Then the support $\mathrm{supp}(\mu)$
of $\mu$ is a strongly partially hyperbolic set supporting $\mu$,
hence is a bi-saturated set by Corollary \ref{sph}. Essential
accessibility of $f$ implies that $m(\mathrm{supp}(\mu))=1$. Hence
$\mathrm{supp}(\mu)=M$ since $\mathrm{supp}(\mu)$ is closed.

Suppose on the contrary that $f$ is not transitive. That is, there
exists an $f$-invariant nonempty open set $U$ such that $M\backslash
\overline{U}\neq\emptyset$. So the set $\Lambda=M\backslash U$ is
$f$-invariant, closed with nonempty interior. Hence $\mu(\Lambda)>0$
and $\mu|_{\Lambda}$ is again an $acip$. Corollary \ref{sph} implies
that $\Lambda$ is bi-saturated. Since $f$ is essentially accessible,
we have $m(\Lambda)=1$ and $m(U)=0$. This contradicts the openness
of $U$.
\end{proof}

Generally for a transitive map $f$, the set $\mathrm{Tran}_f$ of
points with dense orbit is measure-theoretic meagre (although
topological residual). In \cite[Section 5.7]{RHRHU} they extracted
following property which can be viewed as a stronger form of
transitivity (or a weak form of ergodicity).
\begin{definition}An $f$-invariant measure $\mu$ is said to be {\it weakly
ergodic} if the set of points with dense orbit in $\text{supp}(\mu)$
has full $\mu$-measure.
\end{definition}
Clearly that ergodicity implies weak ergodicity, and weak ergodicity
implies the transitivity of the subsystem $(f,\text{supp}(\mu))$. In
the following we show some analogous results in \cite{Br,BDP,RHRHU}
hold for $acip$. To this end let us introduce some necessary
notations. Let $\mu$ be an $acip$ of $f\in\mathrm{SPH}^r(M)$ for
some $r>1$ and $\phi=\frac{d\mu}{dm}$ be the {\it Radon-Nikodym
density} of $\mu$ relative to $m$. Note that the {\it Jacobian}
$J_f:M\to\mathbb{R},x\mapsto \mathrm{Jac}(Df:T_xM\to T_{fx}M)$ is a
H\"older continuous function, bounded and bounded away from $0$ on
$M$. Now for each measurable subset $A\subset M$ we have:
$$\int_{A}\phi(x)dm(x)=\mu(A)
=\mu(fA)=\int_{fA}\phi(y) dm(y)=\int_{A}\phi(fx) J_f(x) dm(x).$$ So
the following holds:
\begin{equation}\label{coho1}
\phi(fx) J_f(x)=\phi(x)\text{ for }m-\text{a.e. }x\in M.
\end{equation}
Let us consider the set $E=\{x\in M:\phi(x)>0\}$. Clearly $E$ is
measurable and $m(E)>0$. By \eqref{coho1} we see that $E$ is also
$f$-invariant. Restricted to the set $E$, the measure $m|_{E}$ is
equivalent to $\mu$. So `(P) for $m$-a.e. $x\in E$' is the same as
`(P) for $\mu$-a.e. $x\in E$'. In this case we will say `(P) a.e.
$x\in E$' for short.

\begin{proposition}\label{biess}
Let $f\in\mathrm{SPH}^r(M)$, $\mu$ be an $acip$ with density $\phi$
and $E=\{x\in M:\phi(x)>0\}$. Then $E$ is bi-essentially saturated.
\end{proposition}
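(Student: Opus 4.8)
The plan is to prove that $E$ is essentially $s$-saturated; the essential $u$-saturation then follows by applying the identical argument to $f^{-1}$, whose stable foliation is $\mathcal{W}^u$ and for which $E$ is again invariant, $C^r$ strongly partially hyperbolic, and of positive measure. Throughout I would use that $E$ is $f$-invariant (by \eqref{coho1}) with $m(E)>0$, and that along stable leaves $f$ enjoys the uniform bounded distortion packaged by the constant $C_1<\infty$ fixed in Section~2.

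First I would pass from the set $E$ to its stable density function. Applying Proposition~\ref{dup}(1) to $E$ and, simultaneously, to its complement $M\setminus E$ (also invariant), one sees that for $m_{s,p}$-a.e. point on each local leaf the leafwise density $d^s(x):=\lim_{n\to\infty} m_{s,x}(B^s_n(x)\cap E)/m_{s,x}(B^s_n(x))$ exists and equals $\mathbf{1}_E(x)$; the upgrade from this leafwise statement to the $m$-a.e. statement uses the absolute continuity of $\mathcal{W}^s$ together with Fubini, exactly as in the proof of Proposition~\ref{dup}(2). The candidate saturated set is then $\widehat{E}^s:=\{x\in M: d^s(x)=1\}$, which is measurable and satisfies $m(E\triangle\widehat{E}^s)=0$ since $d^s=\mathbf{1}_E$ $m$-a.e.

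The key step is to show that $\widehat{E}^s$ is genuinely $s$-saturated, i.e. that $d^s$ is constant along each global stable leaf. If $x,y$ lie on a common stable leaf then $d(f^nx,f^ny)\to0$, so for large $n$ the image $f^nW^s(y,r)$ sits inside a small leaf-ball about $f^nx$. Using $f^nE^c=E^c$ and the bounded distortion of $f^n$ along stable leaves, I would compare the relative measure of $E^c$ in $W^s(y,r)$ with its relative measure in a slightly larger leaf-ball about $x$, where the forward contraction has made the two small-scale densities comparable, and then let $n\to\infty$. This yields $d^s(x)=1\Rightarrow d^s(y)=1$ for $y$ close to $x$ on the leaf; chaining the comparison along a path in the (connected) leaf extends it to all of $W^s(x)$, so $\widehat{E}^s$ is $s$-saturated and witnesses essential $s$-saturation.

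I expect this comparison — transferring leafwise density from $x$ to a nearby $y$ on the same leaf — to be the main obstacle, and I want to stress that it must be driven by the dynamical invariance $f^nE=E$ along a single leaf, not by the transverse stable holonomy: because $E$ is only measurable and is not a priori $s$-saturated, its holonomy images bear no relation to $E$, so any argument based on absolute continuity of holonomy between distinct leaves would fail here. For the same reason I would not route the proof through Lemma~\ref{distortion}, which controls the occupation of $E$ along \emph{backward} iterates and is the natural tool for the topological Theorems in Section~3; the essential-saturation statement instead wants the \emph{forward} contraction, with bounded distortion used only to pull the conclusion back to scale $r$ about $x$ and $y$.
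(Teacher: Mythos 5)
Your skeleton (define the leafwise density $d^s$ via Proposition \ref{dup}, take $\widehat{E}^s=\{d^s=1\}$, prove it is $s$-saturated, and get the $u$-case by running the argument for $f^{-1}$) is reasonable, and your first step is correct. The gap is in your key step, and it is fatal as described: forward contraction cannot transfer the shrinking-ball hypothesis $d^s(x)=1$ to another point $y$ of the leaf. Unwind what each side means. Since $E$ is $f$-invariant and $f^n$ has bounded distortion along stable leaves (the constant $C_1$ of Section 2), $d^s(y)=1$ is equivalent to the \emph{fixed-scale} statement that the relative measure of $E$ in $W^s(y_{-n},\delta)$ tends to $1$, and similarly for $x$. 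But backward iteration expands leaf distances: the leafwise distance between $x_{-n}$ and $y_{-n}$ is at least $\overline{\nu}^{-n}$ times that between $x$ and $y$, so the two families of $\delta$-balls are eventually disjoint and drift apart along the leaf, and no distortion estimate compares them. Your forward comparison does not supply one: the inclusion $f^nW^s(y,r)\subset f^nW^s\bigl(x,r+d_s(x,y)\bigr)$ is just the $f^n$-image of the time-zero inclusion $W^s(y,r)\subset W^s\bigl(x,r+d_s(x,y)\bigr)$, so after applying invariance and bounded distortion on both sides the parameter $n$ cancels and ``letting $n\to\infty$'' is vacuous; what survives is a bound on the density-error of $E$ at $y$ at scale $r$ by the density-error at $x$ at scale $r+d_s(x,y)$, a scale bounded below by $d_s(x,y)>0$, so the hypothesis $d^s(x)=1$ (which concerns balls shrinking to $x$) is never engaged. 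If instead you compare $f^nW^s(y,r)$ with the dynamical ball $B^s_n(f^nx)=f^nW^s(x,\delta)$, then invariance and distortion show the $E$-density of the latter is comparable to the $E$-density of $W^s(x,\delta)$ at the fixed scale $\delta$ --- which is the conclusion you are after, not your hypothesis; the argument is circular. In fact, invariance alone (which is all your mechanism uses) cannot yield pointwise saturation of $\{d^s=1\}$: along a non-recurrent orbit of leaves the trace $E\cap W^s(x)$ can be prescribed arbitrarily, e.g. $E=\bigcup_{n\in\mathbb{Z}}f^n\bigl(W^s(x)\setminus W^s(y,\rho)\bigr)$ is invariant and measurable with $d^s(x)=1$ but $d^s(y)=0$ on the same leaf.

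The missing ingredient is recurrence, and your closing methodological remark is exactly backwards: Lemma \ref{distortion} together with \emph{forward} recurrence is precisely the engine of the paper's proof of this proposition, not merely a tool for the topological theorems of Section 3. The paper takes $\eta_n\to0$ a.e. on $E$ (Proposition \ref{dup}), upgrades this to uniform convergence on a set $E_\epsilon$ with $m(E\setminus E_\epsilon)<\epsilon$ (Egorov), and uses Poincar\'e recurrence for $\mu$ (which is equivalent to $m|_E$) to produce return times $n_i$ with $f^{n_i}x\in E_\epsilon$ for a.e. $x\in E_\epsilon$. Lemma \ref{distortion}, applied at the return point $f^{n_i}x$ whose $n_i$-th backward iterate is $x$ itself, gives $m_{W^s(x)}(W^s(x,\delta)\setminus E)\le C_2\,\eta_{n_i}(f^{n_i}x)\to0$; this is exactly the conversion of a shrinking-ball density statement at one point into a fixed-scale statement at another point of its orbit, and recurrence is the only mechanism available for it. From $m_{W^s(x)}(W^s(x,\delta)\setminus E)=0$ a.e. on $E$, invariance globalizes along leaves and the absolute continuity of $\mathcal{W}^s$ then yields essential $s$-saturation --- note that absolute continuity is genuinely needed at this last stage (and already in your own first step), so your blanket exclusion of it is too strong; what is true is only that holonomy cannot be applied to the non-saturated set $E$ in the middle of the argument.
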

Note that all essential saturations are defined with respect the
volume. If $f$ is volume preserving, then every invariant set is
always bi-essentially saturated by Hopf argument. See \cite[Lemma
6.3.2]{BS} and \cite[Theorem 5.5]{RHRHU}.
\begin{proof}
It suffices to prove that $E$ is essentially $s$-saturated. Let
$B^s_n(x)=f^{n}W^s(x_{-n},\delta)$ and $E^d$ be the set of
$\mathcal{S}$-density points of $E$. By Proposition \ref{dup} we
have $m(E\backslash E^d)=0$.

Consider the functions $\eta_n(x)=m_{W^s(x)}(B^s_n(x)\backslash
E)/m_{W^s(x)}(B^s_n(x))$ for $n\ge1$. So $\eta_n(x)\to 0$ as
$n\to+\infty$ for a.e. $x\in E$. For each $\epsilon>0$, there exists
a subset $E_\epsilon\subset E$ with $m(E\backslash
E_\epsilon)<\epsilon$ on which $\eta_n$ converges uniformly to zero.
For a recurrent point $x\in E_\epsilon$, let $n_i$ be the forward
recurrent times of $x$ with respect to $E_\epsilon$, that is,
$f^{n_i}x\in E_\epsilon$. Note that a.e. $x\in E_\epsilon$ is
recurrent.

By Lemma \ref{distortion}, there exists a uniform constant $C_2$
such that for the point $y=f^{n_i}x$ and $n=n_i$ the following
holds:
\begin{equation*}
m_{W^s(x)}(W^s(x,\delta)\backslash E) \le C_2\cdot
\eta_{n_i}(f^{n_i}x).
\end{equation*}
Passing $n_i$ to $\infty$ we have
$m_{W^s(x)}(W^s(x,\delta)\backslash E)=0$ for a.e. $x\in
E_\epsilon$.

Since $\epsilon$ can be arbitrary small, we have
$m_{W^s(x)}(W^s(x,\delta)\backslash E)=0$ for a.e. $x\in E$. Since
$E$ is $f$-invariant and $f$ is smooth between leaves of
$\mathcal{W}^s$, $m_{W^s(x)}(f^{-n}W^s(f^{n}x,\delta)\backslash
E)=0$ for each $n\ge1$ and a.e. $x\in E$. Hence
$m_{W^s(x)}(W^s(x)\backslash E)=0$ for a.e. $x\in E$. It follows
from the absolute continuity of $\mathcal{W}^s$ that $E$ is
essentially $s$-saturated. Similarly we can show $E$ is essentially
$u$-saturated. This completes the proof.
\end{proof}

\begin{theorem}
Let $f\in\mathrm{SPH}^r(M)$ be essentially accessible. Then every
$acip$ is weakly ergodic. In particular if $\mu$ is an $acip$, then
the orbit $\mathcal{O}(x)$ is dense in $M$ for $\mu$-a.e. $x\in M$.
\end{theorem}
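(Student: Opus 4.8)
The plan is to deduce weak ergodicity from the transitivity already established and from Corollary \ref{sph}, by showing that a countable family of open, $f$-invariant sets each has full $\mu$-measure. By Theorem \ref{tran} we know that $\mathrm{supp}(\mu)=M$, so ``dense orbit in $\mathrm{supp}(\mu)$'' means ``dense orbit in $M$'', and it suffices to prove that the set $D=\{x\in M:\overline{\mathcal{O}(x)}=M\}$ of points with dense (full) orbit satisfies $\mu(D)=1$. Fix a countable base $\{U_k\}_{k\ge1}$ of nonempty open sets for the topology of $M$, and set $\Gamma_k=\bigcup_{n\in\mathbb{Z}}f^{n}U_k$. Each $\Gamma_k$ is open and $f$-invariant, and since $\{U_k\}$ is a base one checks immediately that $x\in\Gamma_k$ for every $k$ if and only if the orbit of $x$ meets every nonempty open set, i.e. $D=\bigcap_k\Gamma_k$. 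Hence it is enough to prove $\mu(\Gamma_k)=1$ for every $k$.

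The heart of the argument is the following claim: if $\mu(M\setminus\Gamma_k)>0$ we reach a contradiction. Indeed, $\Lambda:=M\setminus\Gamma_k$ is a compact $f$-invariant subset of $M$, hence a strongly partially hyperbolic set, and the restriction $\mu|_{\Lambda}$ (suitably normalized) is an $acip$ carried by $\Lambda$. To apply Corollary \ref{sph} cleanly I would pass to $\Lambda':=\mathrm{supp}(\mu|_{\Lambda})\subseteq\Lambda$, which is again a compact $f$-invariant strongly partially hyperbolic set, now with $\mathrm{supp}(\mu|_{\Lambda'})=\Lambda'$ and $\mu(\Lambda')=\mu(\Lambda)>0$. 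Corollary \ref{sph} then gives that $\Lambda'$ is bi-saturated, hence a nonempty measurable bi-saturated set. Since $f$ is essentially accessible, $m(\Lambda')\in\{0,1\}$; because $\mu\ll m$ and $\mu(\Lambda')>0$ we must have $m(\Lambda')>0$, forcing $m(\Lambda')=1$. But $\Lambda'\subseteq M\setminus\Gamma_k$ while $\Gamma_k\supseteq U_k$ is a nonempty open set and therefore has positive volume, so $m(\Lambda')\le m(M\setminus\Gamma_k)<1$, a contradiction. Thus $\mu(M\setminus\Gamma_k)=0$, i.e. $\mu(\Gamma_k)=1$.

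Finally, taking the countable intersection gives $\mu(D)=\mu\big(\bigcap_k\Gamma_k\big)=1$, which is precisely weak ergodicity of $\mu$. For the ``in particular'' clause about forward orbits, I would invoke the Poincar\'e recurrence theorem: $\mu$-a.e.\ $x$ is forward recurrent, and for such $x$ the backward orbit is accumulated by the forward orbit, so $\overline{\mathcal{O}^+(x)}=\overline{\mathcal{O}(x)}=M$. The main obstacle is not the overall scheme, which is short, but the careful application of Corollary \ref{sph}: that corollary is proved under the hypothesis $\mathrm{supp}(\mu)=\Lambda$, so one must first restrict to the support $\Lambda'$ of $\mu|_{M\setminus\Gamma_k}$ before invoking bi-saturation, and then use absolute continuity $\mu\ll m$ as the bridge between the measure-theoretic positivity of $\mu(\Lambda')$ and the volume-theoretic dichotomy supplied by essential accessibility. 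Note also that it is essential to take the full-orbit saturation $\bigcup_{n\in\mathbb{Z}}f^nU_k$ (rather than a one-sided one) so that $\Gamma_k$, and hence its complement, is genuinely $f$-invariant and the corollary applies.
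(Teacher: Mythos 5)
Your proof is correct, but it takes a genuinely different route from the paper's. The paper argues via the density set $E=\{x\in M:\frac{d\mu}{dm}(x)>0\}$: Proposition \ref{biess} shows $E$ is bi-essentially saturated, and then a Hopf-type argument is run for each open ball $B$ --- one introduces the open invariant set $G(B)$ of points near which a.e.\ point of $E$ has an orbit meeting $B$, proves $G(B)$ is bi-saturated using recurrence, the absolute continuity of the stable/unstable laminations and their holonomy maps, invokes essential accessibility to get $m(G(B))=1$, and finishes with a Lebesgue density point argument before intersecting over a countable basis. You instead globalize the paper's own proof of Theorem \ref{tran}: for each basic open set $U_k$ you form the open invariant set $\Gamma_k=\bigcup_{n\in\mathbb{Z}}f^nU_k$, restrict $\mu$ to the compact invariant set $M\setminus\Gamma_k$, pass to the support of the restricted measure (a step you handle more carefully than the paper does in Theorem \ref{tran}, where Corollary \ref{sph} is applied without first passing to the support), and let Corollary \ref{sph} plus essential accessibility force the volume contradiction $m(M\setminus\Gamma_k)=1$ against $m(U_k)>0$; intersecting over $k$ gives weak ergodicity. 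Your route is shorter and avoids the holonomy/absolute-continuity argument entirely, replacing it by the topological content of Corollary \ref{sph} (which of course rests on the same density-basis machinery through Theorem \ref{main}); what the paper's longer route buys is Proposition \ref{biess} itself, which is not a detour but a reusable ingredient needed later for the center-bunched results (Corollary \ref{equi} and Theorem \ref{dich}), whereas your argument produces no such byproduct. Your Poincar\'e-recurrence upgrade from full-orbit density to forward-orbit density --- using that $x\in\omega(x)$ implies the invariant closed set $\omega(x)$ contains the whole orbit, so $\overline{\mathcal{O}^+(x)}=\overline{\mathcal{O}(x)}$ --- is also correct, and settles a point the paper leaves implicit.
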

This result is well known if the system is volume preserving (see
\cite{Br,BDP,RHRHU}). The idea of the proof is similar to Lemma 5 in
\cite{BDP}. Also see Proposition 5.17 in \cite{RHRHU}.
\begin{proof}
Let $\phi$ be the density of $\mu$ with respect to $m$ and $E=\{x\in
M:\phi(x)>0\}$. By Proposition \ref{biess}, we have $E$ is
bi-essentially saturated. Hence $\overline{E}=\mathrm{supp}(\mu)=M$
by Theorem \ref{tran} since $f$ is essentially accessible.

{\it Step 1.} We will show that for each open ball $B$,
$\mathcal{O}(x)\cap B\neq\emptyset$ for $m$-a.e. point $x\in E$. To
this end we first consider $G(B)$, the subset of points $x$ which
has a neighborhood $U$ of $x$ such that $\mathcal{O}(y)\cap
B\neq\emptyset$ for $m$-a.e. $y\in U\cap E$. Evidently $G(B)$ is a
nonempty open subset (and $f$-invariant).

\noindent{\bf Claim.} $G(B)$ is bi-saturated. So $m(G(B))=1$ since
$f$ is essentially accessible.

\noindent{\it Proof of Claim}. Let us prove $G(B)$ is $s$-saturated.
It suffices to show that $q\in G(B)$ for each $q\in W^s(z,\delta)$
and each $p\in G(B)$, where the size $\delta$ is fixed. So the
justification lies in a local foliation box $X$ of $\mathcal{W}^s$
around $p$. Note that we can replace $E$ by its saturate
$\widehat{E}^s$ in the definition of $G(B)$ since $E$ is essentially
$s$-saturated. For a point $x\in X$, denote $W^s_X(x)$ the component
of $W^s(x)\cap X$ that contains $x$. Let $U$ be a small neighborhood
of $p$ with $\mathcal{O}(y)\cap B\neq\emptyset$ for $m$-a.e. $y\in
U\cap \widehat{E}^s$. Let $R$ be the set of recurrent points $z\in
U\cap \widehat{E}^s$ whose orbits enter $B$. Note that $m(U\cap
\widehat{E}^s\backslash R)=0$ since $m|_{E}$ is equivalent to the
invariant measure $\mu$ and $m(E\triangle\widehat{E}^s)=0$. So we
can pick a smooth transverse $T$ of $\mathcal{W}^s_{X}$ in $U$ such
that $T\cap W^s_U(p)\neq\emptyset$ and $m_T(\widehat{E}^s\backslash
R)=0$, where $m_T$ is the induced volume on $T$ (It is helpful to
keep in mind that $\widehat{E}^s$ is not only essentially
$s$-saturated, but $s$-saturated). Now we have

\begin{enumerate}
\item[(I)] For each $y\in W^s_X(x)$ and $x\in R$, we have $\mathcal{O}(y)\cap
B\neq\emptyset$. This follows from that $d(f^nx,f^ny)\to0$ and the
recurrence of $x$: the orbit of $x$ will enters $B$ infinite many
times.

\item[(I\!I)] The set $\bigcup_{x\in T\cap R}W^s_X(x)$ has full $m$-measure in the
set $\bigcup_{x\in T\cap \widehat{E}^s}W^s_X(x)$. This follows from
that both sets are measurable and $\mathcal{W}^s_X$-saturated,
$\mathcal{W}^s_X$ is an absolutely continuous lamination of $X$ and
$m_T(\widehat{E}^s\backslash R)=0$.

\item[(I\!I\!I)] The set $\bigcup_{x\in T}W^s_X(x)$
contains an open neighborhood $V$ of $q$. This follows from that the
holonomy maps along $\mathcal{W}^s_X$ are homeomorphisms.
\end{enumerate}
Also note that $\bigcup_{x\in T\cap
\widehat{E}^s}W^s_X(x)=\left(\bigcup_{x\in T}W^s_X(x)\right)\cap
\widehat{E}^s$. So $\mathcal{O}(y)\cap B\neq\emptyset$ for $m$-a.e.
$y\in V\cap \widehat{E}^s$. This implies $q\in G(B)$ and hence
$G(B)$ is $s$-saturated. Similarly we have $G(B)$ is also
$u$-saturated and hence $m(G(B))=1$ by the essential accessibility
of $f$. This completes the proof of Claim.

Now let $F(B)=\{x\in E: \mathcal{O}(x)\cap B\neq\emptyset\}$. We
need to show that $m(E\backslash F(B))=0$. To derive a contradiction
we assume $m(E\backslash F(B))>0$ and $p\in G(B)$ be a Lebesgue
density point of $E\backslash F(B)$ (here we use $m(G(B))=1$). So
there exists an open neighborhood $U$ of $p$ such that
$\mathcal{O}(x)\cap B\neq\emptyset$ for a.e. $x\in U\cap E$. Then we
have $m(U\cap E\backslash F(B))=0$. But this is impossible since we
choose $p$ as a Lebesgue density point of $E\backslash F(B)$. So we
have $m(E\backslash F(B))=0$ for each open ball $B$.

{\it Step 2.} Since $M$ is compact, there exists a countable
collection of open balls $\{B_n:n\ge1\}$ which forms a basis of the
topology on $M$. Let $F(B_n)$ be given by Step 1. We have
$m(E\backslash F)=0$ where $F=\bigcap_{n\ge1}F(B_n)$. Now for each
$x\in F$, $\mathcal{O}(x)\cap B_n\neq\emptyset$ for each $n\ge1$. So
the orbit $\mathcal{O}(x)$ is dense in $M$ for each point $x\in F$.
Equivalently we see $\mu$-a.e. $x\in M$ has a dense orbit. So the
$acip$ $\mu$ is weakly ergodic. This completes the proof.
\end{proof}
A natural question is, if $f\in\mathrm{SPH}^r(M)$ is essentially
accessible and preserves some $acip$ $\mu$, is $\mu$ an ergodic
measure? This is related to the uniqueness of $acip$. Clearly
uniqueness of $acip$ forces the ergodicity of $acip$. On the other
hand, let us assume that exists two $acip$'s: $\mu=\phi m$ and
$\nu=\psi m$. Let $E=\{x:\phi(x)>0\}$ and $F=\{x:\psi(x)>0\}$. If
$m(E\triangle F)>0$ we can further assume $E$ and $F$ are disjoint.
Proposition \ref{biess} implies that both $E$ and $F$ are
bi-essentially saturated (and nontrivial). In particular none of
them can be essentially bi-saturated.

We do not know whether such example can exist, or a bi-essentially
saturated set is automatically essentially bi-saturated. A
sufficient condition for this property is center bunching. From now
on we assume $r=2$ for simplicity.
\begin{definition}\label{centerb}
A strongly partially hyperbolic diffeomorphism $f$ is {\it center
bunched} if the functions $\nu$, $\tilde{\nu}$ and $\gamma$,
$\tilde{\gamma}$ given in \eqref{partial} can be chosen so that:
$\nu<\gamma\tilde{\gamma}$ and $\tilde{\nu}<\gamma\tilde{\gamma}$.
\end{definition}
\begin{proposition}[Corollary 5.2 in \cite{BW}]
Let $f\in \mathrm{SPH}^2(M)$ be center bunched. Then every
measurable bi-essentially saturated subset is essentially
bi-saturated.
\end{proposition}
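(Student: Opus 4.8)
The plan is to obtain the statement from the \emph{julienne density} technology of \cite{BW}, which replaces ordinary Lebesgue balls by thin, dynamically defined neighborhoods (juliennes) whose sizes shrink at different exponential rates in the stable, center and unstable directions. Write $J^{s}_n(p)$ and $J^{u}_n(p)$ for the stable and unstable juliennes centered at $p$, and call $p$ an \emph{$s$-density point} (resp.\ \emph{$u$-density point}) of a measurable set $A$ if the relative $m$-measure of $A$ inside $J^{s}_n(p)$ (resp.\ $J^{u}_n(p)$) tends to $1$ as $n\to\infty$. The first and main input I would invoke is the key estimate of \cite{BW}: because $f$ is $C^2$ and center bunched, the stable and unstable holonomies are \emph{julienne quasiconformal}, so that each of the families $\{J^{s}_n(p)\}$ and $\{J^{u}_n(p)\}$ forms a density basis and, crucially, a point is an $s$-density point of $A$ if and only if it is a $u$-density point of $A$ if and only if it is an ordinary Lebesgue density point of $A$. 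This coincidence is exactly where center bunching and the $C^2$ hypothesis (absolute continuity and bounded distortion of the center-stable and center-unstable holonomies) are consumed, and it is the real content behind the proposition.

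Granting this, let $E$ be bi-essentially saturated, say $m(E\triangle\widehat{E}^{s})=0$ with $\widehat{E}^{s}$ honestly $s$-saturated and $m(E\triangle\widehat{E}^{u})=0$ with $\widehat{E}^{u}$ honestly $u$-saturated. I would simply take $\widehat{E}^{su}$ to be the set $D$ of Lebesgue density points of $E$. The Lebesgue density theorem gives $m(E\triangle D)=0$ at once, so the only thing left to verify is that $D$ is bi-saturated.

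To see that $D$ is $s$-saturated, note that since $E=\widehat{E}^{s}$ modulo an $m$-null set the two sets have exactly the same Lebesgue density points, so $D$ is the set of Lebesgue density points of $\widehat{E}^{s}$; by the coincidence above it is the set of $s$-density points of $\widehat{E}^{s}$. Now if $p\in D$ and $q\in W^{s}(p)$, the stable holonomy respects the stable julienne basis and carries $J^{s}_n(p)$ essentially onto $J^{s}_n(q)$; being julienne quasiconformal it distorts relative measures only boundedly, and since $\widehat{E}^{s}$ is a union of entire stable leaves it is exactly invariant under this holonomy. Hence the $s$-density of $\widehat{E}^{s}$ at $q$ equals its $s$-density at $p$, namely $1$, so $q\in D$. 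This shows $D$ is $s$-saturated, and the symmetric argument, using $E=\widehat{E}^{u}$ mod null together with the $u$-density characterization, shows $D$ is $u$-saturated. Thus $D$ is bi-saturated and $m(E\triangle D)=0$, i.e.\ $E$ is essentially bi-saturated.

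The decisive obstacle is the first paragraph: establishing julienne quasiconformality of the holonomies and deducing the coincidence of $s$-, $u$-, and Lebesgue density points. That is the technical heart of \cite{BW}, and everything afterward is the short bookkeeping of the last two paragraphs. One should also take care that the stable density basis naturally lives in center-stable sets while the unstable one lives in center-unstable sets, so that the two halves of the saturation argument genuinely use two different density bases; it is precisely the common reference point provided by Lebesgue density that lets the single set $D$ be simultaneously $s$- and $u$-saturated.
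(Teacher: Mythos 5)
First, a point of orientation: the paper gives no proof of this statement at all --- it is imported verbatim as Corollary 5.2 of \cite{BW} --- so your proposal must be measured against Burns--Wilkinson's own argument. Your overall architecture does match theirs: in \cite{BW}, Corollary 5.2 is deduced from their Theorem 5.1 (for $C^2$, center-bunched, partially hyperbolic $f$, the set of Lebesgue density points of a bi-essentially saturated set is bi-saturated) by exactly your bookkeeping, namely taking the bi-saturated representative $\widehat{E}^{su}$ to be the set $D$ of Lebesgue density points of $E$ and invoking the Lebesgue density theorem. So your last two paragraphs reproduce the correct reduction.

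The gap is in the key input you invoke. You assert that, for an \emph{arbitrary} measurable set $A$, $s$-julienne density points, $u$-julienne density points, and Lebesgue density points all coincide, as a consequence of julienne quasiconformality of the holonomies. That statement is false, and it is not what \cite{BW} prove. Juliennes have unbounded eccentricity, so for a general measurable set its julienne density points need not agree with its Lebesgue density points; the coincidence in \cite{BW} is proved precisely for sets carrying the appropriate essential saturation, because the saturation is what neutralizes the eccentricity of the juliennes in the saturated directions. This is exactly where the hypothesis ``bi-essentially saturated'' of the proposition is consumed; stated your way, that hypothesis would enter only through holonomy invariance, and the proposition would become a near-triviality about density bases, which it is not. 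A second, related confusion: for the half of the argument where you move $q\in W^s(p)$, the relevant basis is not a family of ``stable juliennes'' but the \emph{center-unstable} juliennes, which the stable holonomy quasi-preserves and which are applied to the $s$-saturate $\widehat{E}^s$ (the holonomy slides along stable leaves, hence maps $\widehat{E}^s$ exactly onto itself). An object adapted to the stable direction is useless for this step --- indeed $W^s(q)=W^s(p)$, so there is no stable holonomy ``from $p$ to $q$'' acting on such sets --- and your closing remark inverts the same pairing. The proof is repairable: state the coincidence with its saturation hypothesis (noting that $\widehat{E}^s$ and $\widehat{E}^u$, being equal to $E$ mod $0$, are themselves bi-essentially saturated, so the corrected lemma does apply to them) and swap the julienne types in the two halves. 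But as written, the central lemma you rely on is overstated to the point of being false, and it hides where the proposition's hypothesis actually does its work.
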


\begin{corollary}\label{equi}
Let $f\in \mathrm{SPH}^2(M)$ be essentially accessible and center
bunched. If there exists some $acip$, then the $acip$ must be
equivalent to the volume.
\end{corollary}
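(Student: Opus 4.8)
The plan is to assemble this from the two strong results that precede it, the genuine work having already been done there. Let $\mu$ be an $acip$, write $\phi=\frac{d\mu}{dm}$ for its Radon--Nikodym density, and set $E=\{x\in M:\phi(x)>0\}$ as in the discussion before Proposition \ref{biess}. Since $\mu$ is a probability measure and $\mu\ll m$, we have $m(E)>0$. The whole argument will consist of showing that in fact $m(E)=1$, from which equivalence is immediate.

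First I would invoke Proposition \ref{biess}, which gives that $E$ is bi-essentially saturated, i.e.\ $E$ is both essentially $s$-saturated and essentially $u$-saturated. This is where the density-basis machinery and the absolute continuity of $\mathcal{W}^s,\mathcal{W}^u$ enter, but it is already available. The key upgrade step is to pass from \emph{bi-essentially saturated} to \emph{essentially bi-saturated}: here I would apply the center bunching hypothesis through the cited Proposition (Corollary 5.2 in \cite{BW}), which yields a genuinely bi-saturated set $\widehat{E}^{su}$ with $m(E\triangle\widehat{E}^{su})=0$. Note that the two notions differ precisely in whether one can find a \emph{single} saturated set close to $E$, and this is exactly the subtle point that center bunching resolves.

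Once $\widehat{E}^{su}$ is in hand, essential accessibility does the rest: $\widehat{E}^{su}$ is a measurable bi-saturated set, so it has either full or zero volume. Because $m(\widehat{E}^{su})=m(E)>0$, it must have full volume, whence $m(E)=1$ as well. Thus $\phi>0$ for $m$-a.e.\ $x\in M$, which means $m\ll\mu$; combined with $\mu\ll m$ this gives that $\mu$ is equivalent to the volume, completing the proof.

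I do not expect a serious obstacle here, since the corollary is essentially a logical composition: the analytic difficulty lives in Proposition \ref{biess} and in the Burns--Wilkinson center-bunching result, both of which I am allowed to assume. The only points requiring a little care are the bookkeeping distinction between bi-essentially saturated and essentially bi-saturated (so that center bunching is applied to the correct statement), and the final translation of ``$\{\phi>0\}$ has full volume'' into ``$m\ll\mu$,'' which follows at once from $\int_A\phi\,dm=\mu(A)$.
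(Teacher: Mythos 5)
Your proposal is correct and follows exactly the same route as the paper: Proposition \ref{biess} gives that $E=\{\phi>0\}$ is bi-essentially saturated, the center bunching result of Burns--Wilkinson upgrades this to essentially bi-saturated, and essential accessibility applied to the bi-saturated representative forces $m(E)=1$, hence equivalence. Your write-up is in fact slightly more careful than the paper's (which compresses the last two steps into one sentence), but there is no difference in substance.
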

\begin{proof}
Let $\mu$ be an $acip$ and $\phi$ be the density of $\mu$ with
respect to $m$. We showed that $E=\{x\in M:\phi(x)>0\}$ is
bi-essentially saturated. Center bunching implies that $E$ is also
essentially bi-saturated. Since $f$ is essentially accessible and
$m(E)>0$, $m(E)=1$ and hence $\mu$ is equivalent to the volume $m$.
\end{proof}

\begin{remark}
In \cite{BW}, a map $f$ is said to be {\it volume preserving} if $f$
preserves some invariant measure $\mu$ that is equivalent to the
volume. They proved that if $f\in \mathrm{SPH}^2(M)$ is essentially
accessible, center bunched and preserves some $\mu$ equivalent to
the volume, then the measure $\mu$ is ergodic (and $Kolmogorov$). It
is well known that ergodic measures either coincide or absolutely
singular with respect to each other. So by Corollary \ref{equi}, if
$f\in \mathrm{SPH}^2(M)$ is essentially accessible and center
bunched, then either $f$ is volume preserving in the board sense, or
there exists no $acip$ at all.
\end{remark}

Followed by Corollary \ref{equi} we get that the density
$\phi=\frac{d\mu}{dm}$ of an $acip$ is positive a.e. on $M$. Now we
use {\it Cohomologous Theory} developed in \cite{W} to show the
smoothness of the density of $\mu$. Namely let $\psi:M\to\mathbb{R}$
be a potential on $M$ and consider the cohomologous equation on $M$:
\begin{equation}\label{coho2}
\psi=\Psi\circ f-\Psi.
\end{equation}

\begin{proposition}[Theorem A, part $I\!I$ and $I\!I\!I$, in
\cite{W}]\label{smooth} Let $f\in \mathrm{SPH}^2(M)$ be accessible,
center bunched, and volume-preserving. Let $\psi:M\to\mathbb{R}$ be
a H\"older continuous potential. If there exists a measurable
solution $\Psi$ such that \eqref{coho2} holds for a.e. $x\in M$,
then there is a H\"older continuous solution $\Phi$ of \eqref{coho2}
with $\Phi=\Psi$ a.e. $x\in M$.
\end{proposition}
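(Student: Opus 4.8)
The plan is to construct the H\"older solution $\Phi$ explicitly from the dynamics and then show it agrees with $\Psi$ almost everywhere. The starting point is the telescoping identity implied by \eqref{coho2}: iterating $\psi=\Psi\circ f-\Psi$ gives $\Psi(f^N x)-\Psi(x)=\sum_{n=0}^{N-1}\psi(f^n x)$ for every $N\ge 1$. If $x$ and $y$ lie on the same stable leaf, then $d(f^n x,f^n y)\to 0$ geometrically, so, because $\psi$ is H\"older, the series $\Delta^s(x,y):=\sum_{n\ge 0}\bigl(\psi(f^n y)-\psi(f^n x)\bigr)$ converges absolutely and defines a function that is uniformly H\"older in $(x,y)$ along local stable leaves. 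Heuristically $\Delta^s(x,y)$ is the value that $\Psi(x)-\Psi(y)$ must take if $\Psi$ is to be leafwise continuous, since the left-over term $\Psi(f^N x)-\Psi(f^N y)$ ought to vanish in the limit. Symmetrically, iterating backwards along $\mathcal{W}^u$ produces an absolutely convergent, leafwise-H\"older correction $\Delta^u(x,y)$ for $x,y$ on a common unstable leaf.

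The next step is to upgrade the measurable identity to a leafwise-continuous one, and here the hypothesis that $f$ is volume preserving is essential, since it makes $m$ an invariant measure. First I would apply Lusin's theorem to obtain, for every $\varepsilon>0$, a compact set $K_\varepsilon$ with $m(M\setminus K_\varepsilon)<\varepsilon$ on which $\Psi$ is continuous. Since $m$ is $f$-invariant, a.e.\ point returns to $K_\varepsilon$ infinitely often. Using the absolute continuity of $\mathcal{W}^s$ and a Fubini argument on a typical stable leaf, one finds, for a.e.\ pair $(x,y)$ on a common local stable leaf, a common sequence of return times $N_i\to\infty$ with $f^{N_i}x, f^{N_i}y\in K_\varepsilon$; since $d(f^{N_i}x,f^{N_i}y)\to 0$, continuity of $\Psi$ on $K_\varepsilon$ gives $\Psi(f^{N_i}x)-\Psi(f^{N_i}y)\to 0$, and the telescoping identity then forces $\Psi(x)-\Psi(y)=\Delta^s(x,y)$. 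The same argument on $\mathcal{W}^u$ gives $\Psi(x)-\Psi(y)=\Delta^u(x,y)$ a.e. Thus, after modification on a null set, $\Psi$ is genuinely H\"older along both $\mathcal{W}^s$ and $\mathcal{W}^u$; this is the ``essentially bi-continuous'' property encoded by the corrections $\Delta^s,\Delta^u$.

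With leafwise H\"older continuity in hand, the final step is to globalize using accessibility. Fix a base point $p$. Since $f$ is accessible, every $x\in M$ is joined to $p$ by an $su$-path, i.e.\ a concatenation of finitely many stable and unstable segments, and I would define $\Phi(x)$ by adding the successive corrections $\Delta^s$ and $\Delta^u$ along such a path, anchored at the value $\Psi(p)$. The two things to verify are that $\Phi$ is well defined, independent of the chosen $su$-path, and that it is globally H\"older. Well-definedness follows because along a.e.\ $su$-path the accumulated correction equals $\Psi(x)-\Psi(p)$, so two paths give the same answer a.e., and continuity of the construction then forces agreement everywhere. Once $\Phi$ is shown continuous, the a.e.\ identity $\Phi=\Psi$ and the a.e.\ validity of \eqref{coho2} pass to every point by density, so $\Phi$ is a genuine H\"older solution with $\Phi=\Psi$ a.e.

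The main obstacle is the quantitative regularity in the globalization step: showing that the patched function $\Phi$ is H\"older rather than merely continuous. This requires uniform H\"older control of the stable and unstable holonomies and of how $\Delta^s,\Delta^u$ depend on the endpoints as the $su$-path is deformed, together with a lower bound on the ``size'' of accessibility, i.e.\ uniformly bounded length and geometry of $su$-paths. This is exactly where center bunching is indispensable: it guarantees that the stable and unstable holonomy maps are H\"older with compatible exponents, so that the leafwise H\"older estimates survive repeated composition along $su$-paths. The bookkeeping for this, namely the julienne and quasi-conformality estimates controlling the distortion of the holonomies, is the technical heart of the argument and is precisely what is supplied by the machinery of \cite{BW,W}.
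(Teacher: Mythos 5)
First, a point of context: the paper you are being compared against does not prove this proposition at all. It is imported verbatim as Theorem A, parts $I\!I$ and $I\!I\!I$, of Wilkinson's paper \cite{W}, and is used as a black box to deduce Theorem \ref{dich}. So the only meaningful comparison is with Wilkinson's own proof, whose broad architecture your sketch does reproduce: telescoped leafwise corrections $\Delta^s,\Delta^u$, an a.e.\ leafwise rigidity statement obtained from Lusin plus Poincar\'e recurrence (recurrence is legitimate here because the volume-preserving hypothesis makes $m$ invariant), and a globalization along $su$-paths whose regularity is controlled by center bunching.

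As a proof, however, your proposal has a genuine gap, and it sits exactly where you locate ``the technical heart.'' The globalization step is circular twice over. To show $\Phi$ is well defined you argue that two $su$-paths give the same answer ``a.e.,'' and that ``continuity of the construction then forces agreement everywhere''---but continuity (let alone H\"older regularity) of $\Phi$ is precisely the conclusion being proved, so it cannot be invoked to establish well-definedness; moreover ``a.e.\ $su$-path'' has no meaning without a measure on paths, and making the leafwise a.e.\ identities survive finitely many alternating stable and unstable holonomies is exactly the hard point, since the leafwise null sets discarded at each stage can a priori be mapped onto sets of full measure by the holonomies unless one has quantitative (julienne) control of them. You then resolve this by appealing to the machinery ``supplied by \cite{BW,W}''---but the statement being proved \emph{is} the main theorem of \cite{W}, so deferring to it is not admissible in a proof of it. A smaller but real issue: in the Lusin/recurrence step you need synchronous returns of \emph{both} $x$ and $y$ to $K_\varepsilon$, since uniform continuity of $\Psi$ on $K_\varepsilon$ says nothing about points outside it; this requires a leafwise density-point argument with respect to the leaf volumes, not just Poincar\'e recurrence and ``a Fubini argument.'' None of these steps is wrong in spirit---they form the skeleton of Wilkinson's actual argument---but filling them requires the saturated-section and julienne density-point analysis that constitutes essentially all of \cite{W}, so what you have is a correct roadmap rather than a proof.
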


Let $\psi=-\log J_f$ and $\Psi=\log\phi$. Now $\psi$ is a $C^1$
function and $\Psi$ is a well defined measurable function. Corollary
\ref{equi} implies that $\Psi$ is a measurable solution of the
cohomologous equation \eqref{coho2}. Then applying Proposition
\ref{smooth} we get a H\"older continuous solution $\Phi$ of
\eqref{coho2} which coincides with $\Psi$ a.e.. It is evident that
$\mu=e^{\Phi}m$ and the density $e^{\Phi}$ is bounded and bounded
away from zero on $M$. Such a measure $\mu$ is called a {\it smooth
measure}. So we have

\begin{theorem}\label{dich}
Let $f\in \mathrm{SPH}^2(M)$ be accessible and center bunched. If
there exists some $acip$, then the $acip$ must have a H\"older
continuous density with respect to the volume of $M$ which is also
bounded and bounded away from $0$. In words, either $f$ preserves a
smooth measure or there is no $acip$ of $f$.
\end{theorem}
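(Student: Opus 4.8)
The plan is to assemble the final dichotomy by chaining together the results already established earlier in this section, reducing the smoothness of the density to a single application of the Cohomologous Theory. The starting point is that an $acip$ $\mu=\phi m$ exists, so by Corollary \ref{equi} (which invokes Proposition \ref{biess} together with center bunching and essential accessibility) the measure $\mu$ is equivalent to $m$. This equivalence is exactly what is needed to upgrade the measurable solution of the cohomologous equation to a genuine pointwise identity: it guarantees that $\phi>0$ almost everywhere, so that $\Psi=\log\phi$ is a well-defined finite measurable function on $M$.

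The next step is to recognize the functional equation \eqref{coho1} as a cohomologous equation. First I would set $\psi=-\log J_f$, observing that the Jacobian $J_f$ is H\"older continuous (indeed $C^1$, since $f\in\mathrm{SPH}^2(M)$), bounded, and bounded away from zero, so $\psi$ is a H\"older continuous potential. Taking logarithms in $\phi(fx)J_f(x)=\phi(x)$ and rearranging gives $\log\phi(x)-\log\phi(fx)=\log J_f(x)$, i.e. $\psi=\Psi\circ f-\Psi$ with $\Psi=\log\phi$, which is precisely \eqref{coho2}. Thus $\Psi$ is a measurable solution of the cohomologous equation that holds for a.e. $x\in M$.

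Now I would invoke Proposition \ref{smooth}: since $f$ is accessible, center bunched, and (by the equivalence just established) volume preserving in the sense of \cite{BW}, and since $\psi$ is H\"older continuous with a measurable solution $\Psi$, there exists a H\"older continuous solution $\Phi$ of \eqref{coho2} with $\Phi=\Psi$ almost everywhere. Unwinding, $e^{\Phi}=e^{\Psi}=\phi$ a.e., so the density of $\mu$ agrees a.e. with the H\"older continuous function $e^{\Phi}$. Because $\Phi$ is continuous on the compact manifold $M$, it is bounded, whence $e^{\Phi}$ is bounded and bounded away from zero; replacing $\phi$ by $e^{\Phi}$ on the null set where they differ exhibits $\mu=e^{\Phi}m$ as a smooth measure. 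The stated dichotomy follows immediately: either no $acip$ exists, or any $acip$ has H\"older continuous density of the asserted form.

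The main obstacle is not computational but one of verifying that each hypothesis of Proposition \ref{smooth} is in force and that the rewriting of \eqref{coho1} produces exactly the cohomologous equation \eqref{coho2}. In particular, the delicate point is the passage from ``$\mu\ll m$'' to ``$\mu$ equivalent to $m$,'' since without the latter $\Psi=\log\phi$ would fail to be finite a.e. and Proposition \ref{smooth} could not be applied; this is exactly where Corollary \ref{equi}, and hence center bunching and essential accessibility, enter crucially. Once equivalence is secured, the remainder is a direct substitution into the established Cohomologous Theory, so the proof is short.
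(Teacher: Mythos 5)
Your proposal is correct and follows essentially the same route as the paper: invoke Corollary \ref{equi} to upgrade $\mu\ll m$ to equivalence (so $\Psi=\log\phi$ is well defined and $f$ is volume preserving in the sense of \cite{BW}), rewrite \eqref{coho1} as the cohomologous equation \eqref{coho2} with $\psi=-\log J_f$, and apply Proposition \ref{smooth} to replace $\Psi$ by a H\"older continuous solution $\Phi$, giving $\mu=e^{\Phi}m$ with density bounded and bounded away from zero. Your added care in checking the sign conventions and the finiteness of $\log\phi$ matches exactly the points the paper relies on.
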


In particular center bunching holds whenever $E^c$ is
one-dimensional. As a corollary, we obtain:
\begin{corollary}
Let $f\in \mathrm{SPH}^2(M)$ be accessible and $\dim(E^c)=1$. Then
either $f$ preserves a smooth measure or there is no $acip$ of $f$.
\end{corollary}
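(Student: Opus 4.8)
The plan is to deduce this corollary directly from Theorem \ref{dich}. The two alternatives in the statement---either $f$ preserves a smooth measure or $f$ admits no $acip$---are verbatim the conclusion of Theorem \ref{dich} for an accessible, center bunched $f\in\mathrm{SPH}^2(M)$. Since accessibility is assumed, the only thing left to supply is that $\dim(E^c)=1$ forces center bunching in the sense of Definition \ref{centerb}; this is the entire content of the corollary.

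So I would concentrate on the implication $\dim(E^c)=1 \Rightarrow f$ is center bunched. First I observe that when $E^c$ is one-dimensional, for each $x$ the quantity $\|Tf(w)\|$ is the \emph{same} positive number $\lambda^c(x)$ for every unit vector $w\in E^c_x$, and $\lambda^c$ is continuous on the compact manifold $M$. The partial hyperbolicity inequalities \eqref{partial} then read $\nu(x)<\gamma(x)\le\lambda^c(x)\le\tilde\gamma(x)^{-1}<\tilde\nu(x)^{-1}$, so in particular $\nu(x)<\lambda^c(x)$ and $\lambda^c(x)\tilde\nu(x)<1$ for all $x$.

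Next I would re-choose the center gap functions to squeeze $\gamma$ and $\tilde\gamma^{-1}$ around $\lambda^c$. For a small parameter $\epsilon>0$ put $\gamma(x)=\lambda^c(x)(1-\epsilon)$ and $\tilde\gamma(x)^{-1}=\lambda^c(x)(1+\epsilon)$, so that $\gamma(x)\le\lambda^c(x)\le\tilde\gamma(x)^{-1}$ still brackets $\|Tf(w)\|=\lambda^c(x)$. Then $\gamma(x)\tilde\gamma(x)=\frac{1-\epsilon}{1+\epsilon}\to 1$ as $\epsilon\to0$, uniformly in $x$. By compactness of $M$ the continuous functions $\nu,\tilde\nu$ satisfy $\overline{\nu}=\sup\nu<1$ and $\sup\tilde\nu<1$, so for $\epsilon$ small enough $\frac{1-\epsilon}{1+\epsilon}>\max(\sup\nu,\sup\tilde\nu)$, which gives both $\nu<\gamma\tilde\gamma$ and $\tilde\nu<\gamma\tilde\gamma$, i.e.\ exactly center bunching.

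The one point that needs care---and which I expect to be the main (if modest) obstacle---is verifying that this re-chosen pair $(\gamma,\tilde\gamma)$ remains admissible in \eqref{partial}, namely that $\nu(x)<\gamma(x)$ and $\tilde\gamma(x)^{-1}<\tilde\nu(x)^{-1}$ survive the perturbation. Again compactness handles this: since $\nu(x)<\lambda^c(x)$ and $\lambda^c(x)\tilde\nu(x)<1$ are strict on the compact $M$, the ratios $\nu/\lambda^c$ and $\lambda^c\tilde\nu$ are bounded away from $1$, so for all sufficiently small $\epsilon$ one retains $\nu(x)<\lambda^c(x)(1-\epsilon)=\gamma(x)$ and $\lambda^c(x)(1+\epsilon)=\tilde\gamma(x)^{-1}<\tilde\nu(x)^{-1}$. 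Choosing a single $\epsilon$ making all four uniform inequalities hold simultaneously, $f$ is partially hyperbolic with these gap functions and center bunched; Theorem \ref{dich} then yields the stated dichotomy, completing the proof.
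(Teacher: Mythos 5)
Your proposal is correct and takes essentially the same route as the paper: the paper derives this corollary from Theorem \ref{dich} by simply remarking that center bunching holds automatically whenever $\dim(E^c)=1$, which is exactly the implication you verify. Your compactness argument squeezing $\gamma$ and $\tilde\gamma^{-1}$ around $\lambda^c$ is a sound (and more detailed) justification of that remark.
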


Let $\mathrm{CB}^2(M)\subset\mathrm{SPH}^2(M)$ be the collection of
$C^2$ strongly partially hyperbolic diffeomorphisms that are center
bunched. Clearly $\mathrm{CB}^2(M)$ forms an open subset of
$\mathrm{SPH}^2(M)$. Applying Theorem \ref{dich} and the result in
\cite{DW} we have
\begin{theorem}\label{noacip}
The set of maps that admit no $acip$ contains a $C^1$ open and dense
subset of $\mathrm{CB}^2(M)$. In particular the set of maps that
admit no $acip$ contains a $C^1$ open and dense subset of $C^2$
strongly partially hyperbolic diffeomorphisms with $\dim(E^c)=1$.
\end{theorem}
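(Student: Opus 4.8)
The plan is to produce, inside $\mathrm{CB}^2(M)$, a $C^1$ open and dense set of maps that are accessible and carry a \emph{periodic Jacobian obstruction}, and then to show that any such map admits no $acip$. Concretely, let $\mathcal{A}\subset\mathrm{CB}^2(M)$ be the set of \emph{stably accessible} maps, and let $\mathcal{P}\subset\mathrm{CB}^2(M)$ be the set of $f$ possessing a hyperbolic periodic orbit $\mathcal{O}(p)=\{p,fp,\dots,f^{k-1}p\}$, with $f^kp=p$, for which $\prod_{j=0}^{k-1}J_f(f^jp)\neq 1$. I claim that $\mathcal{A}\cap\mathcal{P}$ is $C^1$ open and dense in $\mathrm{CB}^2(M)$ and is contained in the set of maps admitting no $acip$; the theorem then follows immediately. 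The statement for $\dim(E^c)=1$ is a direct consequence, since center bunching is automatic in that case, so that $\mathrm{CB}^2(M)$ contains every $C^2$ strongly partially hyperbolic map with one-dimensional center and the open dense set restricts accordingly.

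First I would verify the inclusion $\mathcal{A}\cap\mathcal{P}\subseteq\{f:\text{no }acip\}$ by contraposition. Suppose $f\in\mathcal{A}\cap\mathcal{P}$ admitted an $acip$. Since $f$ is accessible and center bunched, Theorem~\ref{dich} (through Proposition~\ref{smooth}) yields a H\"older continuous solution $\Phi$ of the cohomological equation \eqref{coho2} with $\psi=-\log J_f$. Both $\psi$ and $\Phi$ are continuous and the equation holds a.e., hence everywhere; telescoping along the periodic orbit $\mathcal{O}(p)$ gives
\begin{equation*}
\sum_{j=0}^{k-1}\log J_f(f^jp)=\sum_{j=0}^{k-1}\bigl(\Phi(f^{j+1}p)-\Phi(f^jp)\bigr)=\Phi(f^kp)-\Phi(p)=0,
\end{equation*}
so that $\prod_{j=0}^{k-1}J_f(f^jp)=1$, contradicting $f\in\mathcal{P}$. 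Therefore no $acip$ can exist.

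It remains to prove that $\mathcal{A}\cap\mathcal{P}$ is $C^1$ open and dense. For openness: $\mathrm{CB}^2(M)$ is $C^1$ open (as noted in the text), $\mathcal{A}$ is $C^1$ open by definition of stable accessibility, and membership in $\mathcal{P}$ is $C^1$ open because a hyperbolic periodic orbit and its derivative cocycle vary continuously, so the strict inequality $\prod_j J_f\neq 1$ persists on a $C^1$ neighborhood. For density I would start from an arbitrary $g\in\mathrm{CB}^2(M)$ and a prescribed $C^1$ neighborhood. By \cite{DW} stable accessibility is $C^1$ dense, and since center bunching is $C^1$ open, a $C^1$-small $C^2$ perturbation produces $g_1\in\mathcal{A}$. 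Applying the $C^1$ closing lemma together with a Kupka--Smale perturbation yields $g_2$ with a hyperbolic periodic orbit $\mathcal{O}(p)$. Finally, composing $g_2$ with a $C^2$ diffeomorphism supported in a small ball around a single point of $\mathcal{O}(p)$ (disjoint from the rest of the orbit) and chosen to alter the local volume distortion forces $\prod_j J_{g_3}(g_3^jp)\neq 1$, giving $g_3\in\mathcal{A}\cap\mathcal{P}$ in the given neighborhood.

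The main obstacle is this last density step: the localized Jacobian-changing perturbation must simultaneously keep the map $C^2$, remain inside the $C^1$-open class $\mathrm{CB}^2(M)$, and preserve both the stable accessibility and the hyperbolicity of $\mathcal{O}(p)$ already arranged. This is handled by observing that partial hyperbolicity, center bunching, stable accessibility, and the hyperbolicity of a fixed periodic orbit are all robust under sufficiently small $C^1$ perturbations; hence a compactly supported modification of small enough $C^1$ size realizes the required change of $\prod_j J_f$ without leaving the admissible class, completing the proof.
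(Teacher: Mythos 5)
Your proposal is correct and follows essentially the same route as the paper: $C^1$-dense stable accessibility from \cite{DW}, the $C^1$ closing lemma to create a hyperbolic periodic orbit, a $C^1$-small perturbation to make the Jacobian product along the orbit differ from $1$, and then the contradiction with the H\"older density guaranteed by Theorem \ref{dich} (your telescoping of the cohomological equation is just equation \eqref{coho1} iterated around the orbit). The only cosmetic difference is that the paper invokes Franks' Lemma to arrange the Jacobian condition, where you build an explicit localized perturbation; both serve the same purpose.
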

The main obstruction for $C^2$ density in Theorem \ref{noacip} is
that we do not know whether stable accessibility is $C^2$ dense in
$\mathrm{SPH}^2(M)$.
\begin{proof}
Dolgopyat and Wilkinson proved in \cite{DW} that there is a $C^1$
dense subset of stable accessible diffeomorphisms in
$\mathrm{SPH}^2(M)$ (also $C^1$ dense in $\mathrm{CB}^2(M)$).
Starting with arbitrary $f\in\mathrm{CB}^2(M)$, we first perturb it
to a stable accessible one, say $f_1$. By $C^1$ closing lemma, there
exists $f_2\in\mathrm{CB}^2(M)$ close to $f_1$ that has some
periodic point. We can assume that $f_2$ is also stable accessible
since we can make it arbitrary close to $f_1$. By Franks' Lemma
\cite{Fr} we can assume that the periodic point $p$ is hyperbolic
with period $k$ and the Jacobian of $Df_2^k:T_xM \to T_xM$ has
absolute value different from $1$. These properties hold robustly
for all maps in a small neighborhood
$\mathcal{U}\subset\mathrm{CB}^2(M)$ of $f_2$.

Let $g\in\mathcal{U}$ and $p_g$ be the continuation of $p$. By the
choice of $\mathcal{U}$, we know that $g$ is accessible and center
bunched. If $g$ admits some $acip$ $\mu$, then by Theorem \ref{dich}
$\mu=\phi m$ for some H\"older continuous function $\phi$ which is
bounded and bounded away from zero. By Equation \eqref{coho1} we
have $\phi(p_g)=J_{g^k}(p_g)\phi(g^k p_g)=J_{g^k}(p_g)\phi(p_g)$.
This is impossible sice $|J_{g^k}(p_g)|\neq1$ and $\phi(p_g)\neq0$.
So each $g\in\mathcal{U}$ admits no $acip$. Hence there exists an
open set $\mathcal{U}$ close to $f$ in which each map admits no
$acip$. This finishes the proof.
\end{proof}

\begin{remark}
It is well known that among $C^2$ Anosov diffeomorphisms the ones
that admits no $acip$ are open and dense, see \cite[Corollary
4.15]{B1}. This is due to the fact that there are many periodic
points for every Anosov diffeomorphisms. Recently Avila and Bochi
\cite{AB} proved that a $C^1$-generic map in $C^1(M,M)$ has no
$acip$. In particular a $C^1$-generic map in $\mathrm{Diff}^1(M)$
has no $acip$.
\end{remark}

\section*{Acknowledgments} We would like to thank Wenxiang Sun,  Lan Wen,
Amie Wilkinson and Zhihong Xia for discussions and suggestions. We
are grateful to Shaobo Gan for useful comments and corrections to
the original manuscript. Especially we thank Amie Wilkinson for
explaining her results in \cite{W}.


\begin{thebibliography}{99}

\bibitem{AP} (MR2415085)
     \newblock J. Alves and V. Pinheiro,
     \newblock \emph{Topological structure of (partially) hyperbolic sets with positive volume},
     \newblock Trans. Amer. Math. Soc. \textbf{360} (2008), no. 10, 5551--5569.

\bibitem{AB} (MR2267725)
     \newblock A. Avila and J. Bochi,
     \newblock \emph{A generic $C^1$ map has no absolutely continuous invariant measure},
     \newblock Nonlinearity \textbf{19} (2006), 2717--2725.

\bibitem{BocV} (MR2090775)
     \newblock J. Bochi and M. Viana,
     \newblock \emph{Lyapunov exponents: how frequently are dynamical systems hyperbolic?},
     \newblock in Modern dynamical systems and applications, 271--297, Cambridge Univ. Press, Cambridge, 2004.

\bibitem{B} (MR0380890)
     \newblock R. Bowen,
     \newblock \emph{A horseshoe with positive measure},
     \newblock Invent. Math. \textbf{29} (1975), 203--204.

\bibitem{B1} (MR0442989)
     \newblock R. Bowen,
     \newblock \emph{Equilibrium states and the ergodic theory of Axiom A diffeomorphisms},
     \newblock Lecture Notes in Mathematics, Vol. \textbf{470}. Springer-Verlag, Berlin-New York, 1975.

\bibitem{Br}
     \newblock M. Brin,
     \newblock \emph{Topological transitivity of a certain class of dynamical systems, and flows of frames
on manifolds of negative curvature},
     \newblock Functional Anal. Appl. \textbf{9} (1975), 8--16.

\bibitem{BS} (MR1963683)
     \newblock M. Brin and G. Stuck,
     \newblock \emph{Introduction to dynamical systems},
     \newblock Cambridge University Press, 2002.

\bibitem{BW}
     \newblock K. Burns and A. Wilkinson,
     \newblock \emph{On the ergodicity of partially hyperbolic systems},
     \newblock Annals of Math. \textbf{171} (2010) 451--489.

\bibitem{BDP} (MR1933439)
     \newblock K. Burns, D. Dolgopyat and Ya. Pesin,
     \newblock \emph{Partial hyperbolicity, Lyapunov exponents and stable ergodicity},
     \newblock J. Statist. Phys. \textbf{108} (2002), no. 5-6, 927--942.

\bibitem{DW} (MR2039999)
     \newblock D. Dolgopyat and A Wilkinson,
     \newblock \emph{Stable accessibility is $C^1$ dense},
     \newblock  Geometric methods in dynamics. II.  Ast\'erisque  No. \textbf{287} (2003), xvii, 33--60.

\bibitem{F}
     \newblock T. Fisher,
     \newblock ``On the structure of hyperbolic sets",
     \newblock Ph.D thesis, Northwestern University, 2004.

\bibitem{Fr} (MR0283812)
     \newblock J. Franks,
     \newblock \emph{Necessary conditions for stability of diffeomorphisms},
     \newblock Trans. A.M.S. \textbf{158} (1971), 301--308.

\bibitem{G} (MR2371598)
     \newblock N. Gourmelon,
     \newblock \emph{Adapted metrics for dominated splittings},
     \newblock Ergod. Th. Dynam. Sys. \textbf{27} (2007), 1839--1849.

\bibitem{HPS} (MR0501173)
     \newblock M. Hirsch, C. Pugh and M. Shub,
     \newblock \emph{Invariant manifolds},
     \newblock Lecture Notes in Mathematics, Vol. \textbf{583}, Springer-Verlag, Berlin-New York, 1977.

\bibitem{NT} (MR1808220)
     \newblock V. Ni\c{t}ic\u{a} and A. T\"or\"ok,
     \newblock \emph{An open dense set of stably ergodic diffeomorphisms in a neighborhood of a non-ergodic one},
     \newblock Topology \textbf{40}  (2001),  no. 2, 259--278.

\bibitem{PS} (MR1750453)
     \newblock C. Pugh and M. Shub,
     \newblock \emph{Stable ergodicity and julienne quasiconformality},
     \newblock J. Eur. Math. Soc. \textbf{2} (2000), no. \textbf{1}, 1--52.

\bibitem{RY} (MR0590160)
     \newblock C. Robinson, L. S. Young,
     \newblock \emph{Nonabsolutely continuous foliations for an Anosov diffeomorphism},
     \newblock Invent. Math. \textbf{61} (1980), no. 2, 159--176.

\bibitem{RHRHU} (MR2388690)
     \newblock F. Rodriguez Hertz, M. Rodriguez Hertz and R. Ures,
     \newblock \emph{A survey of partially hyperbolic dynamics},
     \newblock Partially hyperbolic dynamics, laminations, and Teichm\"uller flow, Fields Inst. Commun.,
vol. \textbf{51}, Amer. Math. Soc., 2007, 35--87.

\bibitem{W}
     \newblock A. Wilkinson,
     \newblock \emph{The cohomological equation for partially hyperbolic diffeomorphisms}, arXiv:{0809.4862}.

\bibitem{X} (MR2220749)
     \newblock Z. Xia,
     \newblock \emph{Hyperbolic invariant sets with positive measures},
     \newblock Discrete Contin. Dyn. Syst. \textbf{15} (2006),  no. 3, 811--818.

\end{thebibliography}
\end{document}